\documentclass{amsart}

\newtheorem{theorem}{Theorem}[section]
\newtheorem{lemma}[theorem]{Lemma}
\newtheorem{corollary}[theorem]{Corollary}
\newtheorem{proposition}[theorem]{Proposition}

\theoremstyle{definition}

\theoremstyle{remark}
\newtheorem{remark}[theorem]{Remark}

\everymath{\displaystyle}

\numberwithin{equation}{section}

\begin{document}

\title[Gradient estimates and volume doubling]{Gradient estimates and volume doubling for locally finite weighted graphs with $CD\psi(n,-K)$ condition}

\
\thanks{}

\author{Qianwei Zhang}
\address{School of Mathematical Sciences, Fudan University, Shanghai 200433, China}
\curraddr{}
\email{qwzhang@m.fudan.edu.cn}
\thanks{}

\subjclass[2010]{Primary }

\keywords{Graphs, Gradient estimate, Curvature dimension conditions, Volume doubling}

\date{}

\dedicatory{}

\begin{abstract}
We study gradient estimates and volume growth on locally finite weighted graphs satisfying the $CD\psi(n,-K)$ condition with $K\geq0$. We establish a variational inequality for the heat semigroup and derive from it a family of Li-Yau type gradient estimates. Furthermore, under suitable assumptions on $\psi$, we establish a uniform heat retention estimate for metric balls. Combined with a heat kernel Harnack inequality obtained from the established gradient estimate, this yields a curvature dependent exponential volume doubling estimate 
\begin{equation*}
    V(x,2r)\leq C e^{cKr^2}V(x,r).
\end{equation*}
When $K=0$, the result reduces to a uniform volume doubling and further implies that the bottom of the spectrum of $-\Delta$ vanishes on infinite graphs. 
\end{abstract}

\maketitle
\section{Introduction}
Gradient estimates are a fundamental tool in geometric analysis. In 1975, Cheng-Yau \cite{Cheng-Yau} established gradient estimates for harmonic functions on complete Riemannian manifolds under lower Ricci curvature bounds. In 1986, Li-Yau
\cite{Li-Yau} obtained a global differential gradient estimate for positive solutions of the heat equation $\left(\Delta-\partial_{t}\right)u=0$. More precisely, if the Ricci curvature of an $n$-dimensional complete
Riemannian manifold is bounded below by $-K$ with $K\geq0$, then for every $\alpha>1$,
\begin{equation}
    \left |\nabla \log u \right |^{2}-\alpha\frac{\partial_{t}u}{u}\leq\frac{n\alpha^{2}K}{2(\alpha-1)}+\frac{n\alpha^{2}}{2t}, 
\label{chu}
\end{equation}
When $K=0$, letting $\alpha\to1+$ gives
\begin{equation}
    \left |\nabla \log u \right |^{2}-\frac{\partial_{t}u}{u}\leq\frac{n}{2t},
\label{li-yau classical}
\end{equation}
Under negative Ricci curvature bounds, several refinements of the Li-Yau estimate have subsequently been obtained. In particular, Bakry-Qian \cite{Bakry-Qian} in 1999 derived the time dependent estimate
\begin{equation}
    \left|\nabla\log u\right|^{2}-\left(1+\frac{2Kt}{3}\right)\frac{\partial_tu}{u}\leq\frac{n}{2t}+\frac{nK}{2}\left(1+\frac{Kt}{3}\right).
\label{Bakry-Qian manifold}
\end{equation}
Further parameter dependent refinements were obtained, for example, by Li-Xu \cite{Li-Xu} and Zhang \cite{Zhang}.

The extension of gradient estimates to graphs involves additional difficulties, most notably the absence of a chain rule for the discrete Laplacian. As a result, the logarithmic transformation used in the manifold setting cannot in general be applied directly, and discrete gradient estimates are often formulated in terms of special functions such as the square root or, more generally, a concave function. Another issue is that several nonequivalent notions of curvature are available on graphs. In this paper, we work within the Bakry-\'Emery curvature dimension framework. Bakry-\'Emery \cite{Bakry-Emery} introduced the $\Gamma$-calculus and the curvature dimension criterion for diffusion semigroups in 1985. On a Riemannian manifold, the corresponding
curvature dimension inequality takes the form
\begin{equation*}
   \Gamma_2\left(f\right)\geq\frac{1}{n}\left(\Delta f\right)^2+K\Gamma\left(f\right),
\end{equation*}
and is closely related, through the Bochner formula, to a lower bound on the Ricci curvature. The Bakry-\'Emery framework was developed for
locally finite graphs by Lin-Yau \cite{Lin-Yau} in 2010, who studied lower Ricci curvature bounds in terms of the $\Gamma$-calculus and obtained eigenvalue estimates. This curvature condition has been studied extensively in \cite{Chung-Lin-Yau, Hua-Lin, Jost-Liu}. In 2015, Bauer et al. \cite{Bauer} introduced the exponential curvature dimension condition $CDE$ and proved the Li-Yau type gradient estimate
\begin{equation}
  \frac{\Gamma\left(\sqrt{u}\right)}{u}-\frac{\partial_{t}u}{2u}\leq \frac{n}{2t}, 
\label{tu li-yau 1}
\end{equation}
for positive solutions of the heat equation on finite graphs satisfying $CDE(n,0)$. In 2018, M\"unch \cite{Munch} introduced the $CD\psi$ condition and the corresponding $\psi$-operators, where $\psi\in C^1(0,+\infty)$ is concave. The $CD\psi$ condition is closely related to the classical $CD$ condition. More precisely, if
$\psi\in C^2(0,+\infty)$ is concave with
$\psi'(1)\neq0$ and $\psi''(1)<0$, then
\begin{equation*}
    CD\psi(n,0)\Rightarrow CD\left(-\frac{\psi^{\prime\prime}(1)}{\left[\psi^{\prime}(1)\right]^{2}}n,0\right). 
\end{equation*}
In particular, both the $CD{\log}(n,0)$ condition and the $CD{\sqrt{\cdot}}(n,0)$ condition imply a classical curvature dimension condition. On finite graphs satisfying $CD\psi(n,0)$, M\"unch \cite{Munch} obtained
\begin{equation}
    \Gamma^{\psi}\left(u\right)-{\psi}^{\prime}\left(1\right)\frac{\partial_{t}u}{u}\leq \frac{n}{2t},
\label{tu li-yau 2}
\end{equation}
for positive solutions of the heat equation. For
$\psi(s)=\sqrt{s}$, \eqref{tu li-yau 2} reduces to
\eqref{tu li-yau 1}, while for $\psi(s)=\log s$ it takes the same form as the classical Li-Yau estimate \eqref{li-yau classical}. Thus, the $CD\psi$ framework provides a unified setting for
logarithmic and nonlogarithmic gradient estimates on graphs. Subsequent work \cite{ Li-Zhang, Lv-Wang} further investigated gradient estimates under negative curvature bounds within the $CD\psi$ framework, including discrete analogs of the Bakry-Qian type estimate 
\begin{equation}
    \Gamma^\psi(u)-\psi'(1)\left(1+\frac{2Kt}{3}\right)\frac{\partial_tu}{u}\leq\frac{n}{2t}+\frac{nK}{2}\left(1+\frac{Kt}{3}\right), 
\label{Bakry-Qian graph}
\end{equation}
for positive solutions of the heat equation on locally finite weighted graphs satisfying the $CD\psi(n,-K)$ condition with $K\ge0$. 
 
Another fundamental geometric property is the volume doubling property. Let $M$ be a complete $n$-dimensional Riemannian manifold satisfying $ \operatorname{Ric}\geq -(n-1)K$ with $K\ge0$. The Bishop-Gromov volume comparison theorem yields a
curvature dependent doubling estimate of the form
\begin{equation}
    V(x,2r)
    \leq
    C(n) e^{C(n)\sqrt{K}r}V(x,r).
\label{manifold exponential doubling}
\end{equation}
In particular, when $K=0$, one obtains a uniform volume doubling and hence a polynomial volume growth. Volume growth is also closely related to spectral geometry. On complete noncompact Riemannian manifolds, Brooks \cite{Brooks} in 1981 established a fundamental relation between exponential volume growth and the bottom of the spectrum of the Laplace-Beltrami operator; see also the monograph of Grigor'yan \cite{Grigoryan} from 2009 for a systematic treatment of
spectral and heat kernel aspects of volume growth. From an analytic point of view, Baudoin-Garofalo \cite{Baudoin-Garofalo} in 2011 gave a heat semigroup proof of volume doubling on complete
Riemannian manifolds with nonnegative Ricci curvature. Their argument is based on a logarithmic Sobolev type inequality and the resulting exponential integrability estimate, combined with the Li-Yau Harnack inequality. This semigroup approach is particularly well suited for adaptation to discrete settings.

On graphs, relations among volume growth, isoperimetry and the spectrum of the Laplacian have also been studied extensively. In
1984, Dodziuk \cite{Dodziuk} established a discrete analogue of Cheeger's estimate relating an isoperimetric constant to the bottom of the spectrum. For weighted graphs, Folz \cite{Folz} in 2014 obtained estimates relating exponential and subexponential volume growth to the bottom of the essential spectrum of general graph Laplacians. These results illustrate that volume growth has not only geometric but also spectral consequences.
Deriving volume growth directly from curvature is, however, more delicate on graphs, since there is no direct analogue of the Bishop-Gromov comparison theorem. In 2015, Bauer et al. \cite{Bauer} proved polynomial volume growth under $CDE(n,0)$. In
2019, Horn et al. \cite{Horn}, following the heat semigroup strategy of Baudoin-Garofalo, established a variational inequality and an
exponential integrability estimate under the stronger condition $CDE^\prime(n,0)$ and derived the volume doubling property. Also in the
classical Bakry-\'Emery setting, M\"unch \cite{Munch 2} introduced a modified nonlinear heat equation and in 2019 used it to prove volume doubling for finite graphs satisfying $CD(n,0)$. As a spectral application, he further showed that there are no expander families satisfying the finite dimensional nonnegative curvature dimension condition. More recently, Russ-Pajot \cite{Russ-Pajot} in 2025 extended M\"unch's modified nonlinear heat equation approach to finite and
infinite weighted graphs of bounded geometry satisfying a classical Bakry-\'Emery curvature-dimension condition. In particular, a key part of their argument is the well-posedness of the modified nonlinear heat equation in the infinite weighted graph setting. In 2026, Guo-Huang-Huang \cite{Guo} further developed this
modified heat flow method and proved that every connected simple graph of bounded degree satisfying the dimension free condition
$CD(\infty,0)$ for the unnormalized Laplacian is volume doubling and supports a scale invariant local $L^2$-Poincar\'e inequality.

The results described above concerning curvature and volume doubling are mainly restricted to nonnegative curvature conditions. Although gradient estimates and Harnack inequalities are available for locally finite weighted graphs satisfying the $CD\psi(n,-K)$ condition with $K\geq0$, the corresponding consequences for volume growth under
negative $CD\psi$ curvature bounds remain less understood. In this paper, we develop a heat semigroup approach to both gradient estimates and volume growth under the $CD\psi$ condition. We first establish a general variational inequality for the heat semigroup. By choosing suitable auxiliary functions, we derive from it a family
of Li-Yau type gradient estimates containing
\eqref{tu li-yau 2} and \eqref{Bakry-Qian graph} as special cases. In particular, the Bakry-Qian type estimate yields a heat kernel
Harnack inequality under negative curvature bounds. We then adapt the heat semigroup approach to volume doubling to the nonlinear $CD\psi$ setting. Under suitable assumptions on $\psi$,
we establish an exponential integrability estimate and derive from it a uniform heat retention estimate for metric balls. Combining the heat retention estimate with the heat kernel Harnack inequality, we obtain a curvature dependent exponential volume doubling estimate under $CD\psi(n,-K)$. When $K=0$, this reduces to a uniform volume doubling. As a spectral consequence, we further show that on an
infinite graph satisfying the same assumptions, the bottom of the spectrum of $-\Delta$ is zero.

Our first main result is a family of Li-Yau type gradient estimates with negative $CD\psi$ curvature bounds. Although a more general estimate involving an arbitrary auxiliary function $W$ is established in Section~4, the following explicit family illustrates its main consequences.
\begin{theorem}
    Let $G=(V,E)$ be a locally finite weighted graph satisfying the $CD\psi\left(n,-K\right)$ condition with $K\geq0$, where $\psi\in C^{1}\left(0,+\infty\right)$ is a concave function. Let $b>\frac{1}{2}$ be a constant. Then for any $0<f\in \ell^{\infty}\left(V\right)$,
    \begin{equation}
        \Gamma ^\psi\left(P_t f\right)\leq \psi^\prime \left(1\right)\left(1+\frac{2Kt}{2b+1}\right)  \frac{\Delta P_t f}{P_t f}+\frac{n}{2} \left(\frac{b^2}{\left(2b-1\right)t}+\frac{K^2 t}{2b+1}+K\right).
    \label{intro gradient family}
    \end{equation} 
\end{theorem}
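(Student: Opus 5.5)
We will prove the estimate by the semigroup interpolation method of Baudoin--Garofalo, adapted to $CD\psi$ as in M\"unch. Fix $T>0$, $x\in V$ and $0<f\in\ell^\infty(V)$. For $0\le s\le T$ consider
\begin{equation*}
\Phi(s)=a(s)\,P_s\!\left(P_{T-s}f\,\Gamma^\psi(P_{T-s}f)\right)(x)-\psi'(1)\,\gamma(s)\,P_s\!\left(\Delta P_{T-s}f\right)(x),
\end{equation*}
where $a,\gamma$ are smooth nonnegative auxiliary functions to be chosen. Differentiating in $s$ and using the $\psi$-calculus identities, the contribution of the first term is $a'P_s(u\Gamma^\psi(u))+2aP_s(u\Gamma_2^\psi(u))$, where $u=P_{T-s}f$. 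The second term contributes $-\psi'(1)\gamma'P_s(\Delta u)$, since $\partial_s P_s(\Delta P_{T-s}f)=0$. The $CD\psi(n,-K)$ condition gives
\begin{equation*}
\Gamma_2^\psi(u)\ge \frac1n\left(\Delta^\psi u\right)^2-K\Gamma^\psi(u).
\end{equation*}
We use $\Delta^\psi u=\psi'(1)\Delta u/u-\Gamma^\psi(u)$, which holds by the definition of the $\psi$-operators. Then $(X-Y)^2\ge 2\lambda X-2\lambda Y-\lambda^2$, applied pointwise with $X=\psi'(1)\Delta u/u$, $Y=\Gamma^\psi(u)$ and a free function $\lambda(s)$, turns the right-hand side into a combination that is linear in $u\Gamma^\psi(u)$, linear in $\psi'(1)\Delta u$, and has a constant term. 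This yields the general variational inequality
\begin{equation*}
\Phi'(s)\ge \left(a'-2Ka-\tfrac{4a\lambda}{n}\right)P_s(u\Gamma^\psi u)+\psi'(1)\left(\tfrac{4a\lambda}{n}-\gamma'\right)P_s(\Delta u)-\tfrac{2a\lambda^2}{n}P_s(u).
\end{equation*}
We choose $\lambda$ so that $a'-2Ka-4a\lambda/n=0$ and set $\gamma'=4a\lambda/n$. Then $\Phi'(s)\ge -\tfrac{2a\lambda^2}{n}P_Tf(x)$, because $P_s(u)=P_Tf$.

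Next we integrate from $0$ to $T$. Impose $a(T)=0$, so $\Phi(T)=-\psi'(1)\gamma(T)\Delta P_Tf$, and take $\gamma(0)=0$. Then
\begin{equation*}
a(0)P_Tf\,\Gamma^\psi(P_Tf)\le \psi'(1)\big(\gamma(T)-\gamma(0)\big)\Delta P_Tf+\frac{2}{n}\int_0^T a\lambda^2\,ds\;P_Tf.
\end{equation*}
Substituting $\lambda=\frac n4\big(\tfrac{a'}{a}-2K\big)$ gives $\gamma(T)=\int_0^T(a'-2Ka)\,ds=-a(0)-2K\int_0^Ta\,ds$. The sign conventions are then checked by normalizing $a(0)=-1$, or equivalently by working with $a$ decreasing and flipping signs. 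For the family we take $a(s)=(T-s)^{2b}/T^{2b}$, up to the choice $a(0)=1$ and decreasing orientation. A direct computation gives
\begin{equation*}
\int_0^T a\,ds=\frac{T}{2b+1},\qquad
\frac n8\int_0^T a\left(\frac{a'}{a}-2K\right)^2 ds=\frac n8\left[\frac{4b^2}{(2b-1)T}+\frac{8bK}{2b}+\frac{4K^2T}{2b+1}\right].
\end{equation*}
The middle term equals $2K\int|a'|$, which is $2K$. The integral of $a'^2/a$ converges exactly when $b>\tfrac12$. Dividing by $P_Tf>0$ produces $\psi'(1)\big(1+\tfrac{2KT}{2b+1}\big)\tfrac{\Delta P_Tf}{P_Tf}+\tfrac n2\big(\tfrac{b^2}{(2b-1)T}+K+\tfrac{K^2T}{2b+1}\big)$, which is \eqref{intro gradient family} with $t=T$.

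The main obstacle is rigor on an infinite locally finite graph. We must justify differentiating $\Phi$, and in particular exchanging $\partial_s$ with $P_s$ for unbounded quantities such as $\Gamma^\psi(P_{T-s}f)$, and we need $\partial_sP_s g=P_s\Delta g$. We will handle this as in Lv-Wang and Li-Zhang. First we treat finitely supported truncations, or use the Dirichlet heat semigroups on exhausting finite subsets. Boundedness of $f$ together with positivity $P_tf\ge c>0$ locally keeps $\Gamma^\psi$ finite. We then pass to the limit by monotone convergence; stochastic completeness, or the standard assumption of the paper's setup, ensures $P_s1=1$ and $P_s\Delta=\Delta P_s$ on bounded functions. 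Concavity of $\psi$ enters only through the $\psi$-calculus identities already established earlier in the paper.
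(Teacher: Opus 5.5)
Your computation is the paper's argument in different notation. Your $a$ is the paper's $\alpha=W^2$ with $W(s)=(1-s/T)^b$. Your $\lambda=\frac n4(a'/a-2K)$ is the paper's $\gamma$, chosen to kill the coefficient of $\phi$ in Theorem~\ref{variational th}. Your auxiliary $\gamma$ only moves the $s$-independent quantity $P_s(\Delta P_{T-s}f)=\Delta P_Tf$ to the left-hand side, and the final constants agree with Theorem~\ref{zu}.

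\textbf{Sign error in the integrated inequality.} From $\Phi(T)-\Phi(0)\ge-\frac2n\int_0^Ta\lambda^2\,ds\,P_Tf$ with $a(T)=0$ you get
$a(0)P_Tf\,\Gamma^\psi(P_Tf)\le\psi'(1)\bigl(\gamma(0)-\gamma(T)\bigr)\Delta P_Tf+\frac2n\int_0^Ta\lambda^2\,ds\,P_Tf$,
not $\gamma(T)-\gamma(0)$. Since $\gamma(0)-\gamma(T)=a(0)+2K\int_0^Ta\,ds$, taking $a(0)=1$ gives the coefficient $1+\frac{2KT}{2b+1}$ directly. Your remedy ``normalize $a(0)=-1$'' is not admissible. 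The $CD\psi(n,-K)$ step and the bound $(X-Y)^2\ge 2\lambda X-2\lambda Y-\lambda^2$ are multiplied by $a$, so you need $a\ge0$ throughout; with $a(0)<0$ the conclusion would become a lower bound on $\Gamma^\psi$. Also, the middle term is $-4K\int_0^Ta'\,ds=4K$, not $2K$; your value $\frac{8bK}{2b}$ was already correct.

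\textbf{The rigor plan does not work and misses the device the proof needs.} With Dirichlet semigroups on exhausting finite sets, $P^{\Omega}_{T-s}f$ vanishes off $\Omega$, so its $\Gamma^\psi$ is undefined on $G$. The induced subgraph also need not satisfy $CD\psi(n,-K)$. Local positivity does not control the infinite sums defining $P_s$ either. What is needed is a global bound, which the paper supplies in Lemmas~\ref{uniform} and~\ref{partial}:
\begin{enumerate}
\item First assume $R_f<\infty$. Then $|\Delta f|\le D_\mu(R_f+1)f$ gives $e^{-ct}f\le P_tf\le e^{ct}f$, so the neighbour ratios of $v=P_{T-s}f$ stay in a fixed compact subset of $(0,\infty)$ for all $s\in[0,T]$.
\item This makes $\Delta^\psi v$, $v\Gamma^\psi(v)$ and $v\Omega^\psi(v)$ uniformly bounded on $[0,T]\times V$. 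That justifies $\partial_sP_s(v\Gamma^\psi v)=2P_s(v\Gamma^\psi_2 v)$ by termwise differentiation and shows $\phi$ is bounded.
\item Your $\lambda=-\frac{nb}{2(T-s)}-\frac{nK}{2}$ blows up at $s=T$. So integrate over $[0,T-\delta]$, use the boundedness of $\phi$ together with $a(T-\delta)\to0$, and let $\delta\to0$.
\item Remove the assumption $R_f<\infty$ by applying the result to $f+\epsilon$. Here $\Delta P_T(f+\epsilon)=\Delta P_Tf$, and $\Gamma^\psi(P_Tf+\epsilon)\to\Gamma^\psi(P_Tf)$ pointwise.
\end{enumerate}
This reduction is genuinely needed. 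For a bounded $f$ with $R_f=\infty$, the function $f\Gamma^\psi(f)$ can be unbounded (e.g.\ for $\psi=\log$), so $a(T)P_T(f\Gamma^\psi f)$ need not even make sense.
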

Taking $b=1$ in \eqref{intro gradient family} recovers the Bakry-Qian type estimate \eqref{Bakry-Qian graph}, while setting $K=0$ further yields Münch's estimate \eqref{tu li-yau 2}. We next turn to the volume growth consequences of the $CD\psi(n,-K)$ condition with $K\ge0$. Recall that $H_\psi$ and $\eta_\psi$ are defined in Section 2.

\begin{theorem}
    Let $G=(V,E)$ be a locally finite weighted graph satisfying the $CD\psi\left(n,-K\right)$ condition with $K\geq0$, where $\psi\in C^{1}\left(0,+\infty\right)$ is a concave function with $\psi^\prime \left(1\right)>0,\ H_{\psi}<\infty$ and $\limsup _{\lambda \downarrow 0}\frac{\eta_{\psi}(\lambda)}{\lambda^{2}}<\infty $. Then there exist positive constants $c,C$ depending only on $n,\psi,D_\mu,\mu_{\max}, w_{\min}$ such that for any $x\in V$ and any $r>0$,
    \begin{equation}
        V\left(x,2r\right)\leq Ce^{cKr^2}V\left(x,r\right).
    \label{intro negative doubling}
    \end{equation}
\end{theorem}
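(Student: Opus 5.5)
Following the Baudoin--Garofalo / Horn et al.\ heat semigroup strategy, the proof splits into a lower bound and an upper bound for the heat kernel on balls, played off against each other at time $t=r^2$. Write $p_t(x,y)$ for the heat kernel with respect to $\mu$, so that $P_t\mathbf 1_A(x)=\sum_{y\in A}p_t(x,y)\mu(y)$.

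The first ingredient is a \emph{uniform heat retention estimate}. Using the hypotheses $H_\psi<\infty$ and $\limsup_{\lambda\downarrow0}\eta_\psi(\lambda)/\lambda^2<\infty$, I would first prove an exponential integrability bound. The starting point is the variational inequality for $P_t$ (the general estimate with auxiliary function $W$), applied to $f=e^{\lambda g}$ with $g$ a $1$-Lipschitz function such as $d(x,\cdot)$ truncated. Since the $CD\psi$ inequality is nonlinear in $\Gamma^\psi$, I would expect to need $H_\psi$ to compare $\Gamma^\psi(e^{\lambda g})$ with $\Gamma(g)$. The condition on $\eta_\psi$ should then give the quadratic control $\lambda^2$ for small $\lambda$, with $w_{\min}$ and $\mu_{\max}$ entering through $\Gamma(g)\le C$ for Lipschitz $g$. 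Integrating the resulting differential inequality in $t$, in the style of Herbst, should give
\[
P_t\bigl(e^{\lambda d(x,\cdot)}\bigr)(x)\le \exp\bigl(C\lambda\sqrt t+C\lambda^2 t\,(1+Kt)\bigr).
\]
Chebyshev's inequality with $\lambda\sim r/t$ then yields $P_t\mathbf 1_{B(x,r)^c}(x)\le \tfrac12$ for $t=\varepsilon r^2$, up to a factor $e^{cKr^2}$ in the curvature regime. Equivalently, $P_t\mathbf 1_{B(x,r)}(x)\ge \tfrac12 e^{-cKr^2}$. This step is the main obstacle. The nonlinearity of $\psi$ and the lack of a chain rule make the Herbst argument delicate, and for small $r$ (below the graph scale) the bound must instead be handled trivially by local finiteness together with $D_\mu$ and $\mu_{\max}$. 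If the variational route stalls, I would try the direct estimate $\partial_s P_s(e^{\lambda g})\le C\lambda^2 P_s(e^{\lambda g})$, which uses only $\Gamma(e^{\lambda g})\le C\lambda^2 e^{2\lambda g}$ for bounded-jump $g$.

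The second ingredient is a \emph{heat kernel Harnack inequality}. Apply the gradient family of the first theorem with $b=1$, integrated along paths in the standard way for graphs, as in Bauer et al.\ and M\"unch. Comparing $p_t(x,y)$ with $p_{2t}(x,z)$ for $y,z$ within distance $2r$ of each other and $t=\varepsilon r^2$ should give
\[
p_t(x,y)\le C\,e^{cKr^2}\,p_{2t}(x,z).
\]
Here the $t$-dependent factor $(1+2Kt/3)$ and the $K^2t$ term from the first theorem contribute exactly the factor $e^{cK t}$ with $t\sim r^2$, while the term $\frac{n}{2t}\cdot t$ contributes a power of $2$.

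To combine them, the retention estimate together with the Harnack inequality gives
\[
\tfrac12 e^{-cKr^2}\le P_t\mathbf 1_{B(x,r)}(x)\le V(x,r)\sup_{B(x,r)}p_t(x,\cdot)\le Ce^{cKr^2}V(x,r)\,p_{2t}(x,x).
\]
On the other hand, applying the Harnack inequality in the other direction and using $\sum_y p_{s}(x,y)\mu(y)\le1$,
\[
V(x,2r)\inf_{B(x,2r)}p_{2t}(x,\cdot)\le 1,\qquad \inf_{B(x,2r)}p_{2t}(x,\cdot)\ge C^{-1}e^{-cKr^2}p_{s}(x,x)
\]
for a suitable comparable time $s$. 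Chaining these bounds, and adjusting the times via the Harnack inequality once more, gives $V(x,2r)\le Ce^{cKr^2}V(x,r)$. All constants depend only on $n$, $\psi$, $D_\mu$, $\mu_{\max}$ and $w_{\min}$.
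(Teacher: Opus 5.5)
Your architecture matches the paper's: heat retention on balls, a heat kernel Harnack inequality from the case $b=1$, and comparison of $V(x,r)$ with $V(x,2r)$ through the on-diagonal kernel. Your Harnack step is essentially Corollary~\ref{harnack heat kernel -K}. The gap is the retention estimate, which is the heart of the proof and the place where the hypothesis on $\eta_\psi$ is used. You flag it as the main obstacle, and none of the mechanisms you name produces it. The $W$-family of Li--Yau estimates bounds $\Delta P_tf/P_tf$ only from \emph{below}, with a non-integrable $-\frac{c}{t}$ singularity. It also holds for every positive bounded $f$, so it cannot see the Lipschitz bound on $g$. It therefore gives neither your upper bound on $P_t(e^{\lambda d(x,\cdot)})(x)$ nor a comparison with $t=0$. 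Your fallback is false. Already at $s=0$ and at the center, $\Delta e^{\lambda d(x,\cdot)}(x)=\frac{\deg(x)}{\mu(x)}(e^{\lambda}-1)$, which is of order $\lambda$, not $\lambda^2$. In general $\Delta e^{\lambda g}=e^{\lambda g}\bigl(\lambda\Delta g+O(\lambda^2)\bigr)$, so the first-order term is governed by $\Delta g$, not by $\Gamma(g)$. The crude bound $D_\mu(e^{\lambda}-1)$ then gives an exponent of order $r$ when $\lambda\sim 1/r$ and $t\sim r^2$.

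The missing idea is to return to Theorem~\ref{variational th} with a linear weight that keeps the initial-datum term $P_t(f\Gamma^\psi(f))$.
\begin{itemize}
\item The paper takes $\alpha=\tau+T-t$ and $\gamma=-\frac{n}{4\alpha}-\frac{nK}{2}$ (Lemma~\ref{log ineq -K}). It applies this to $f=e^{\lambda g}$ with $g=-d(x,\cdot)\le 0$, which is bounded with $R_f<\infty$, and drops the term $\Gamma^\psi(P_tf)\ge0$.
\item It bounds $\Gamma^\psi(e^{\lambda g})\le D_\mu\eta_\psi(\lambda)$ directly from the definition. No comparison with $\Gamma(g)$ is needed, and $\mu_{\max},w_{\min}$ enter only through the Harnack inequality.
\item Optimizing in $\tau$ balances the $\tau$-linear term against $\frac{n}{8\lambda}\log(1+t/\tau)$ and leaves an integrable $t^{-1/2}$ singularity. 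Integrating from $t=0$, where $\varphi_\lambda(0,x)=g(x)=0$, gives $P_t(e^{\lambda g})(x)\ge e^{-\Phi-nKt/(2\psi'(1))}$.
\item The inequality $e^{\lambda g}\le e^{-\lambda r}+\mathbf 1_{B(x,r)}$ converts this into retention once $\lambda$ is fixed by $D_\mu\eta_\psi(\lambda)=r^{-2}$. For this $\lambda$, $H_\psi<\infty$ and the $\limsup$ condition give $\lambda r\ge c_0>0$.
\end{itemize}
The symmetric choice $\alpha=\tau+t$, $\gamma=\frac n4\bigl(\frac{1}{\tau+t}-2K\bigr)$, applied to a truncation of $d(x,\cdot)$, would give an upper bound of the type you want. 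However, the coefficient of $\Delta P_tf$ then becomes $\psi'(1)t\bigl(1-K(2\tau+t)\bigr)$, so it requires $K(2\tau+t)<1$. In either direction the curvature terms stay harmless only for $t$ of order at most $1/K$. This is why the paper uses $t_r=A\min\{r^2,1/K\}$ rather than $\varepsilon r^2$: the retention constant is then uniform, and $e^{cKr^2}$ comes solely from the Harnack inequality at time scale $1/K$ over distance $2r$.

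There are also two smaller slips. First, $P_t\mathbf 1_{B^c}\le\frac12e^{cKr^2}$ does not imply $P_t\mathbf 1_{B}\ge\frac12e^{-cKr^2}$; the former is vacuous once $e^{cKr^2}\ge2$. Second, the Harnack inequality only bounds earlier times by later ones, so it cannot give $p(2t,x,x)\le C\,p(s,x,x)$ for $s<2t$. You can use the monotonicity of $t\mapsto p(t,x,x)$ instead. Alternatively, as the paper does, use Cauchy--Schwarz $\bigl(P_t\mathbf 1_{B(x,r)}(x)\bigr)^2\le p(2t,x,x)V(x,r)$, then Harnack from $(2t,x)$ to $(4t,y)$ together with $\sum_y\mu(y)p(4t,x,y)\le1$.
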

Since $\sqrt{K}r\leq\frac{1}{2}\left(1+Kr^{2}\right)$, our estimate has a weaker curvature dependence than the sharp Riemannian Bishop-Gromov estimate. Nevertheless, when $K=0$, \eqref{intro negative doubling} gives a uniform volume doubling property. In the infinite graph case, this further implies the
vanishing of the bottom of the spectrum of $-\Delta$.

The remainder of this paper is organized as follows. In Section 2, we introduce the basic notation for locally finite weighted graphs,
recall the $CD\psi$ condition, and collect the properties of the heat kernel and the heat semigroup used throughout the paper. In Section 3, we establish the main variational inequality for the heat semigroup. In Section 4, by choosing suitable auxiliary functions in this variational inequality, we derive a family of Li-Yau type
gradient estimates and a heat kernel Harnack inequality under negative curvature bounds. In Section 5, we establish an exponential integrability estimate and derive a uniform heat retention estimate for metric balls. Combining this estimate with the Harnack inequality
obtained in Section 4, we prove the curvature dependent exponential volume doubling property under $CD\psi(n,-K)$ with $K\ge0$. We conclude with a spectral consequence of the uniform volume doubling property in the case $K=0$.

\section{Preliminaries}
Let $G=(V,E)$ be a graph, where $V$ denotes the vertex set and $E$ denotes the edge set of $G$. For $x,y\in V$, we write $x\sim y$ if $x$ and $y$ are adjacent, i.e., there is an edge connecting $x$ and $y$. The graph $G$ is called connected if for any $x,y\in V$, there exists a finite path $x=x_0\sim x_1\sim \cdots \sim x_n=y$; the length of such a path is defined as $n$.

Given a measure $\mu :V\rightarrow (0,+\infty)$ on the vertices and a weight function $w:E\rightarrow (0,+\infty)$ on edges such that for any adjacent vertices $x,y$, the weight satisfies $w_{xy}>0$ and $w_{xy}=w_{yx}$, the quadruple $G=(V,E,\mu,w)$ is called a weighted graph. To simplify notation, we always regard $G=(V,E)$ as a connected weighted graph with the measure $\mu$ and the weight $w$ implicitly understood, unless otherwise stated.

A graph $G$ is called locally finite if for every $x\in V$ the number of vertices adjacent to $x$ is finite. In this paper, we further require locally finite weighted graphs to satisfy the following conditions
\begin{equation*}
    D_{\mu}:=\sup_{x\in V}\frac{\deg(x)}{\mu(x)}<\infty,\ \ \mu_{\max}:=\sup_{x\in V}\mu(x)<\infty,\ \  w_{\min}:=\inf_{x\sim y}w_{xy}>0,
\end{equation*}
where 
$\text{deg}(x):=\sum_{y\sim x}w_{xy}$ denotes the weighted degree of $x$.

For $x,y\in V$, the distance $d(x,y)$ is defined as the minimum of the lengths of all paths connecting $x$ and $y$. The ball $B(x,r)$ on the graph is defined by
\begin{equation*}
    B(x,r):=\left\{y\in V\mid d(x,y)<r\right\},
\end{equation*}
and the volume $V(x,r)$ of the ball $B(x,r)$ is defined by
\begin{equation*}
V(x,r):=\sum_{y\in B(x,r)}\mu(y).
\end{equation*}

We denote by $\mathbb{R}^{V}$ and $\ell^{\infty}(V)$ the spaces of all real-valued functions and all bounded functions on the graph, respectively. The $\ell^{\infty}$-norm of $f\in \ell^{\infty}(V)$ is defined by
\begin{equation*}
    \left \| f\right \|_{\infty}:=\sup_{x\in V}|f(x)|.     
\end{equation*}
A function $f\in \mathbb{R}^{V}$ is called $d$-Lipschitz if for any adjacent vertices $x\sim y$ one has
\begin{equation*}
    \left | f(y)-f(x)\right |\le d. 
\end{equation*}
For a strictly positive function $0<f\in\mathbb{R}^{V}$, we define the constant $R_{f}$ as follows
\begin{equation*}
    R_{f}:=\sup_{y\sim x}\frac{f(y)}{f(x)}. 
\end{equation*}

\subsection{Curvature dimension condition}
For any $f\in \mathbb{R}^{V}$ and any $x\in V$, the Laplace operator $\Delta$ is defined by 
\begin{equation*}
    \Delta f\left(x\right):=\frac{1}{\mu\left(x\right)}\sum_{y\sim x}w_{xy}\left[f\left(y\right)-f\left(x\right)\right]. 
\end{equation*}

Let $\psi\in C^1(0,+\infty)$. For any $0<f\in \mathbb{R}^{V}$ and any $x\in V$, the $\psi$-Laplace operator is defined by
\begin{equation*}
    \left(\Delta^{\psi}f\right)\left(x\right):=\Delta\left[\psi\left(\frac{f}{f\left(x\right)}\right)\right]\left(x\right).
\end{equation*}
We first introduce the function $\overline{\psi}$. For a concave function $\psi\in C^{1}(0,+\infty)$, set
\begin{equation*}
    \overline{\psi}\left(s\right):=\psi^{\prime}\left(1\right)\left(s-1\right)-\left[\psi\left(s\right)-\psi\left(1\right)\right].
\end{equation*}
Since $\psi$ is concave, $\overline{\psi}(s)\ge 0$ for all $s>0$, and $\overline{\psi}(1)=0$. The $\psi$-gradient operator is defined as
\begin{equation*}
    \Gamma^{\psi}:=\Delta^{\overline{\psi}}.
\end{equation*}
By the properties of $\overline{\psi}$, for $0<f\in \mathbb{R}^{V}$ and $ x_0\in V$, the function 
\begin{equation*}
    x\mapsto\overline{\psi}\left(\frac{f(x)}{f\left(x_{0}\right)}\right)
\end{equation*}
reaches its minimum at $x=x_0$, which implies $\Gamma^{\psi}\ge0$. The following lemma relates $\Delta^{\psi}$ and $\Gamma^{\psi}$.
\begin{lemma}
    Let $G=(V,E)$ be a locally finite weighted graph and let $\psi\in C^{1}(0,+\infty)$. Then for any function $0<f\in\mathbb{R}^{V}$, 
\begin{equation*}
    -\Delta^{\psi}f=\Gamma^{\psi}\left(f\right)-{\psi}^{\prime}\left(1\right)\frac{\Delta f}{f}.
\end{equation*}
\label{chain psi}
\end{lemma}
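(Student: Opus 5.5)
The identity is a pointwise computation, so I fix $x\in V$ and expand both sides directly from the definitions of $\Delta$, $\Delta^{\psi}$ and $\Gamma^{\psi}=\Delta^{\overline{\psi}}$. No concavity is needed, since $\overline{\psi}$ is defined for any $\psi\in C^1(0,+\infty)$ by the same formula.

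First I write out $\Gamma^{\psi}(f)(x)$. By definition,
\begin{equation*}
\Gamma^{\psi}(f)(x)=\frac{1}{\mu(x)}\sum_{y\sim x}w_{xy}\left[\overline{\psi}\left(\frac{f(y)}{f(x)}\right)-\overline{\psi}(1)\right].
\end{equation*}
Since $\overline{\psi}(1)=0$, substituting the formula for $\overline{\psi}$ with $s=f(y)/f(x)$ gives
\begin{equation*}
\Gamma^{\psi}(f)(x)=\frac{1}{\mu(x)}\sum_{y\sim x}w_{xy}\left[\psi'(1)\left(\frac{f(y)}{f(x)}-1\right)-\psi\left(\frac{f(y)}{f(x)}\right)+\psi(1)\right].
\end{equation*}

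Next I split this sum into two parts. The first part is
\begin{equation*}
\frac{\psi'(1)}{f(x)}\cdot\frac{1}{\mu(x)}\sum_{y\sim x}w_{xy}\bigl(f(y)-f(x)\bigr)=\psi'(1)\frac{\Delta f(x)}{f(x)}.
\end{equation*}
The second part is
\begin{equation*}
-\frac{1}{\mu(x)}\sum_{y\sim x}w_{xy}\left[\psi\left(\frac{f(y)}{f(x)}\right)-\psi\left(\frac{f(x)}{f(x)}\right)\right]=-\Delta\left[\psi\left(\frac{f}{f(x)}\right)\right](x)=-\Delta^{\psi}f(x).
\end{equation*}
Adding the two parts gives $\Gamma^{\psi}(f)=\psi'(1)\frac{\Delta f}{f}-\Delta^{\psi}f$, which rearranges to the claimed identity.

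The only care needed is bookkeeping. The normalization $f/f(x)$ must be frozen at the base point $x$ before $\Delta$ is applied, so that $\psi(1)$ is the value at the center. Local finiteness ensures all sums are finite, so splitting them is legitimate.
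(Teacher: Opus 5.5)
Your proposal is correct and follows essentially the same route as the paper. The paper compresses your computation into one line, $\Delta^{\overline{\psi}}f=\psi'(1)\Delta^{\mathrm{id}}f-\Delta^{\psi}f$, using linearity of $\psi\mapsto\Delta^{\psi}$, the fact that $\Delta$ kills constants, and $\Delta^{\mathrm{id}}f=\Delta f/f$; your explicit expansion just writes out these steps.
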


\begin{proof}
By the definition of $\Gamma^{\psi}$,  we have
\begin{equation*}
    \Gamma^{\psi}f=\Delta^{\overline\psi}f=\psi^{\prime}\left(1\right)\Delta^{id}f-\Delta^{\psi}f=\psi^{\prime}(1)\frac{\Delta f}{f}-\Delta^{\psi}f.
\end{equation*}
Rearranging gives the desired identity.
\end{proof}
The second $\psi$-gradient form is defined for any $0<f\in \mathbb{R}^{V}$ by
\begin{equation*}
    2\Gamma_{2}^{\psi}\left(f\right):=\Omega^{\psi}\left(f\right)+\frac{\Delta f\cdotp\Delta^{\psi}f}{f}-\frac{\Delta \left(f\cdotp\Delta^{\psi}f\right)}{f},
\end{equation*}
where, for any $x\in V$,
\begin{equation*}
    \Omega^{\psi}f\left(x\right):=\Delta\left[\psi^{\prime}\left(\frac{f}{f\left(x\right)}\right)\cdotp\frac{f}{f\left(x\right)}\left(\frac{\Delta f}{f}-\frac{\Delta f\left(x\right)}{f\left(x\right)}\right)\right]\left(x\right).
\end{equation*}
We now recall the $\psi$-curvature-dimension condition (the $CD\psi(n,K)$ condition) on graphs. A graph satisfies the $CD\psi(n,K)$ condition if for every $0<f\in \mathbb{R}^{V}$,
\begin{equation*}
    \Gamma_2^{\psi}\left(f\right)\geq \frac{1}{n}\left(\Delta^{\psi} f\right)^2+K\Gamma^{\psi}\left(f\right).
\end{equation*}

The Harnack constant $H_{\psi}$ is defined by
\begin{equation*}
    H_{\psi}:=\sup_{s>1}\frac{\left(\log s\right)^2}{\overline{\psi}(s)}.
\end{equation*}
For $\lambda\ge0$, we define $\eta_{\psi}(\lambda)$ by
\begin{equation*}
    \eta_{\psi}(\lambda):=\max\left\{\overline{\psi}\left(e^{\lambda }\right),\overline{\psi}\left(e^{-\lambda }\right)\right\}. 
\end{equation*}

\subsection{Heat kernel and heat semigroup}
For every $f\in \ell^{\infty}(V)$, we have 
\begin{equation*}
    \left \| \Delta f\right . \|_{\infty}\leq2D_{\mu}\left \| f\right \|_{\infty}. 
\end{equation*}
Hence, $\Delta$ is a bounded operator on $\ell^{\infty}(V)$, and the semigroup generated by $\Delta$ is defined for $t\ge0$ by
\begin{equation*}
    P_{t}:=e^{t\Delta}=\sum_{k=0}^{\infty}\frac{t^{k}}{k!}\Delta^{k}. 
\end{equation*}
The heat kernel $p:[0,+\infty)\times V\times V\rightarrow[0,+\infty)$ associated with $\Delta$ is defined by
\begin{equation*}
    p(t,x,y)=\frac{1}{\mu(y)}P_{t}\textbf{1}_{\left\{y\right\}}(x). 
\end{equation*}
Then $p$ is the integral kernel of $P_t$ with respect to $\mu$, namely, for any $f\in \ell^{\infty}(V)$
\begin{equation}
    P_tf(x)=\sum_{y\in V}\mu(y)p(t,x,y)f(y).
\label{semigroup and kernel}
\end{equation}

\begin{proposition}
    Let $G=(V,E)$ be a locally finite weighted graph. For all $x,y\in V$  and $f,g\in \ell^{\infty}(V)$, the following properties hold 
    \begin{enumerate}
        \item $P_{0}=I, \ \ P_{t}P_{s}=P_{t+s}$.
        \item $f\ge 0 $ implies $P_{t}f\ge0$, and $f\ge g $ implies $P_{t}f\ge P_{t}g$. 
        \item $P_{t}\textbf{1}=\textbf{1},\ \ \left \| P_t f\right \|_{\infty}\le\left \|f \right \|_{\infty}$.
        \item $\partial_{t}P_{t}f=\Delta P_{t}f=P_{t}\Delta f$ for $t>0$.
        \item $p(t,x,y)>0$ for $ t>0$.
        \item $p(t,x,y)=p(t,y,x)$.
        \item $p(t+s,x,y)=\sum_{z\in V}\mu(z)p(t,x,z)p(s,z,y)$.
        \item $\sum_{z\in V}\mu(z)p(t,x,z)=1$. 
        \item $\partial_{t}p(t,x,y)=\Delta_{x}p(t,x,y)=\Delta_{y}p(t,x,y)$ for $ t>0$.
    \end{enumerate}
\label{heat}
\end{proposition}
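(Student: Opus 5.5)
We prove the nine properties in the order listed, using two facts: $\Delta$ is bounded on $\ell^\infty(V)$ with $\|\Delta\|\le 2D_\mu$, and $\Delta$ is symmetric with respect to $\mu$. The plan rests on a decomposition $\Delta = A - D$. Here $D$ is multiplication by $\deg(x)/\mu(x)$, which is bounded by $D_\mu$. The operator $A$ is defined by $Af(x)=\frac{1}{\mu(x)}\sum_{y\sim x}w_{xy}f(y)$; it is a positivity-preserving bounded operator. Choosing $c=D_\mu$, we write $\Delta = (A + (c-D)) - c$, where $c-D\ge0$ is a multiplication operator. Since the two summands commute ($cI$ is scalar),
\begin{equation*}
P_t=e^{-ct}e^{t(A+c-D)},
\end{equation*}
and the exponential series of the nonnegative operator $A+c-D$ has nonnegative terms.

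\textbf{Properties (1), (4) and (2).} Properties (1) and (4) are standard facts for exponentials of bounded operators on a Banach space. The series converges absolutely in operator norm, so it can be differentiated termwise, and $\Delta$ commutes with every partial sum. Positivity in (2) follows from the nonnegative series above, and monotonicity follows from linearity.

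\textbf{Property (3).} Since $\Delta\mathbf 1=0$, the series gives $P_t\mathbf 1=\mathbf 1$. The contraction bound then follows from $-\|f\|_\infty\mathbf 1\le f\le\|f\|_\infty\mathbf 1$ together with positivity.

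\textbf{Properties (8) and (7) via the kernel representation.} Both rest on \eqref{semigroup and kernel} for bounded $f$, and justifying this representation on an infinite graph is the step I expect to be the main obstacle. $P_t$ is defined on $\ell^\infty$, but a bounded $f$ is not a norm-convergent sum of indicators. The plan is as follows:
\begin{itemize}
\item First check \eqref{semigroup and kernel} for finitely supported $f$, where it holds by linearity.
\item For $f\ge0$ bounded, approximate $f$ from below by $f\mathbf 1_{B_k}$ with $B_k$ finite, increasing and exhausting $V$. Monotonicity gives $\sum_y\mu(y)p(t,x,y)f(y)\le P_tf(x)$.
\item Apply the same inequality to $\|f\|\mathbf 1-f\ge 0$ and add the two inequalities. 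This uses $P_t\mathbf 1=\mathbf 1$ and the bound $\sum_y\mu(y)p(t,x,y)\le1$, which comes from the truncation step itself.
\item To close the argument I need $\sum_y\mu(y)p(t,x,y)=1$ exactly, i.e. (8). I will prove this directly: $\sum_y \mu(y)p(t,x,y)=\lim_k P_t\mathbf 1_{B_k}(x)$, and $P_t(\mathbf 1-\mathbf 1_{B_k})(x)\to0$. The latter holds because in the series $\sum_j t^j(\Delta^j(\mathbf 1-\mathbf 1_{B_k}))(x)/j!$, the $j$-th term vanishes once $B_k\supset B(x,j+1)$ (since $\Delta^j g(x)$ depends only on $g$ on $B(x,j+1)$). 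Each term is bounded by $t^j(2D_\mu)^j/j!$, so dominated convergence applies.
\end{itemize}
Property (7) then follows from applying \eqref{semigroup and kernel} to $f=P_s\mathbf 1_{\{y\}}$ together with (1).

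\textbf{Property (6) and the remaining properties.} For symmetry, I will use $\sum_x\mu(x)\Delta g(x)h(x)=\sum_x\mu(x)g(x)\Delta h(x)$ for finitely supported $g,h$, which holds by the symmetry of $w$. This gives $\langle\Delta^j\mathbf 1_{\{y\}},\mathbf 1_{\{x\}}\rangle_\mu=\langle\mathbf 1_{\{y\}},\Delta^j\mathbf 1_{\{x\}}\rangle_\mu$: the functions $\Delta^j\mathbf 1_{\{y\}}$ remain finitely supported by local finiteness, so the pairing is a finite sum. Summing the series yields $\mu(x)p(t,x,y)=\mu(y)p(t,y,x)\cdot\frac{\mu(x)}{\mu(x)}$, that is, $p(t,x,y)=p(t,y,x)$.

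Strict positivity (5) comes from the nonnegative series: if $d(x,y)=m$, then the term $(A+c-D)^m\mathbf 1_{\{y\}}(x)\ge A^m\mathbf 1_{\{y\}}(x)>0$ along a shortest path, and every other term is $\ge0$. Finally, (9) follows from (4) applied to $\mathbf 1_{\{y\}}$ for the $x$-derivative, and then from symmetry (6) for the $y$-derivative.
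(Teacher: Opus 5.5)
Your proof is correct and follows the route the paper only sketches when it defers to Horn et al.\ (boundedness of $\Delta$, the positivity/Markov structure, which you make explicit through $\Delta=(A+c-D)-c$, and the symmetry of $w$). Your finite-propagation argument supplies the one step on infinite graphs that the citation leaves implicit, namely \eqref{semigroup and kernel} for bounded $f$ and property (8); applied to $f\mathbf 1_{B_k^c}$, it even gives \eqref{semigroup and kernel} directly, so your sandwich step is optional.

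The only slip is bookkeeping in (6). Summing the series gives $\mu(x)P_t\mathbf 1_{\{y\}}(x)=\mu(y)P_t\mathbf 1_{\{x\}}(y)$, i.e.\ $\mu(x)\mu(y)\,p(t,x,y)=\mu(y)\mu(x)\,p(t,y,x)$, rather than the displayed $\mu(x)p(t,x,y)=\mu(y)p(t,y,x)$. Your conclusion $p(t,x,y)=p(t,y,x)$ is nonetheless right.
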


\begin{proof}
    These properties follow from the boundedness of $\Delta$, the Markov property of $(P_t)_{t\geq0}$, and the symmetry of the edge weights; see, e.g., \cite{Horn}. 
\end{proof}

\section{The main variational inequality}

Fix $T>0$. For any $0<f\in \ell^{\infty}\left(V\right)$, we define $v(t,x)$ by 
\begin{equation}
    v\left(t,x\right):=P_{T-t}f\left(x\right),
\label{v def}
\end{equation}
where $t\in [0,T]$ and $x\in V$. 

\begin{lemma}
     Let $G=\left(V,E\right)$ be a locally finite weighted graph and let $\psi\in C^{1}\left(0,+\infty\right)$ be a concave function. Then for every $0<f\in \ell^{\infty}\left(V\right)$ with $R_{f}<\infty$, the functions $\Delta^{\psi}v, v\cdot \Gamma^{\psi}(v)$ and $ v\cdot \Omega^{\psi}(v)$ are uniformly bounded on $[0,T]\times V$, where $v$ is given by \eqref{v def}.
\label{uniform}
\end{lemma}
\begin{proof}
    For  $x\in V$, we have
    \begin{equation}
    \begin{aligned}
        \left|\Delta f(x)\right|\le f(x)\frac{1}{\mu(x)}\sum_{y\sim x}\omega_{xy}
        \left|\frac{f(y)}{f(x)}-1\right|\leq D_\mu (R_{f}+1) f(x).
    \label{laplace bounded}
    \end{aligned}
    \end{equation}
    Set $a:=D_{\mu}(R_{f}+1)$. Then $\left|\Delta f\right|\le af$. Using properties 2 and 4 of Proposition \ref{heat}, we obtain
    \begin{equation*}
        \partial_t P_t f=P_t\Delta f\le aP_tf.
    \end{equation*}
    Consequently, $P_t f\le e^{at}f$. A similar argument with $\Delta f\ge-a f$ gives $P_t f\ge e^{-at}f$. Now fix $x\in V$ and $t\in [0,T]$. For any $y\sim x$, 
    \begin{equation}
        R_f^{-1} e^{-2aT}\le\frac{v(y)}{v(x)}=\frac{P_{T-t} f(y)}{P_{T-t} f(x)} \le R_f e^{2aT},
    \label{v sim}
    \end{equation}
    Since $\psi$ is continuous on $(0,+\infty)$, the ratio $\frac{v(y)}{v(x)}$ remains in a compact set, and therefore there exists a constant $C_{1}=C_{1}\left(\psi,f,T\right)$ such that 
    \begin{equation*}
        \left| \psi\left(\frac{v(y)}{v(x)}\right)\right|\leq C_1. 
    \end{equation*}
    Hence, we obtain 
    \begin{equation*}
    \begin{aligned}
        \left|\Delta^{\psi}v(x)\right|&=\left|\Delta\psi\left(\frac{v(y)}{v(x)}\right)\right|=\left|\frac{1}{\mu(x)}\sum_{y\sim x}w_{xy}\left[ \psi\left(\frac{v(y)}{v(x)}\right)-\psi(1)\right]\right|\\
        &\leq\frac{1}{\mu(x)}\sum_{y\sim x}w_{xy}\left| \psi\left(\frac{v(y)}{v(x)}\right)-\psi(1)\right|
        \leq \left[C_1+\left|\psi(1)\right|\right] D_{\mu}. 
    \end{aligned}
    \end{equation*}
    Moreover, using \eqref{v sim} together with the boundedness of $v$ (since $\left \| v\right \|_{\infty}\le\left \|f \right \|_{\infty}$), an estimate similar to \eqref{laplace bounded} yields a constant $C_{2}=C_{2}\left(f,D_{\mu}\right)$ such that $\left|\Delta v\right|\leq C_{2}$. 
    By Lemma \ref{chain psi}, multiplying by $v$ yields
    \begin{equation*}
        v\cdot\Gamma^{\psi}(v)=\psi^{\prime}(1)\Delta v-v\cdot\Delta^{\psi}v. 
    \end{equation*}
    Thus, $ v\cdot\Gamma^{\psi}(v)$ is uniformly bounded on $[0,T]\times V$. The continuity of $\psi^{\prime}$ on $(0,+\infty)$ guarantees that for $y\sim x$ there exists $C_{3}=C_{3}\left(\psi,f,T\right)$ with
    \begin{equation*}
        \left| \psi^{\prime}\left(\frac{v(y)}{v(x)}\right)\right|\leq C_3. 
    \end{equation*}
    Finally, for the $\Omega^{\psi}$ term we compute 
    \begin{equation*}
    \begin{aligned}
        \left(v\Omega^{\psi}v\right)\left(x\right)&=v(x)\Delta\left[\psi^{\prime}\left(\frac{v}{v\left(x\right)}\right)\cdotp\frac{v}{v\left(x\right)}\left(\frac{\Delta v}{v}-\frac{\Delta v\left(x\right)}{v\left(x\right)}\right)\right]\left(x\right)\\
        &=\frac{1}{\mu(x)}\sum_{y\sim x}w_{xy}\psi^{\prime}\left(\frac{v(y)}{v\left(x\right)}\right)\left[\Delta v(y)-\Delta v\left(x\right)\frac{v(y)}{v\left(x\right)}\right], 
    \end{aligned}  
    \end{equation*}
    where we used the fact that the function inside $\Delta$ vanishes at $x$. Hence,    
    \begin{equation*}   
    \begin{aligned}
        \left|\left(v\Omega^{\psi}v\right)\left(x\right)\right|
        &\leq\frac{1}{\mu(x)}\sum_{y\sim x}w_{xy}\left|\psi^{\prime}\left(\frac{v(y)}{v\left(x\right)}\right)\right|\left( \left|\Delta v(y)\right|+\left|\Delta v\left(x\right)\right|\frac{v(y)}{v\left(x\right)}\right)\\
        &\leq C_2C_{3} D_{\mu}\left(1+R_f e^{2aT}\right).
    \end{aligned}
    \end{equation*}
\end{proof}

By Lemma \ref{uniform}, $v\cdotp \Delta^{\psi}v$ is uniformly bounded on $[0,T]\times V$. For $t\in [0,T]$ and $x\in V$,  define $\phi(t,x)$ by
\begin{equation}
   \phi\left(t,x\right):= P_t\left(v\cdotp \Gamma^{\psi}\left(v\right)\right)\left(x\right), 
\label{phi def}
\end{equation}
where $v$ is given by \eqref{v def}. 
\begin{lemma}
     Let $G=\left(V,E\right)$ be a locally finite weighted graph and let $\psi\in C^{1}\left(0,+\infty\right)$ be a concave function. Then for every $0<f\in \ell^{\infty}\left(V\right)$ with $R_{f}<\infty$, 
     \begin{equation*}
         \partial_t \phi\left(t,x\right)=2P_t\left(v\cdotp\Gamma_2^{\psi}\left(v\right)\right)\left(x\right), \ \ (t,x)\in[0,T]\times V,
     \end{equation*}
    where $v$ and $\phi$ are given by \eqref{v def} and \eqref{phi def}, respectively.
\label{partial}
\end{lemma}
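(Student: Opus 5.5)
The plan is to differentiate $\phi(t,x)=P_t g_t(x)$, where $g_t:=v(t,\cdot)\,\Gamma^\psi(v(t,\cdot))$, by a product rule. This should give
\begin{equation*}
\partial_t\phi=P_t\bigl(\Delta g_t+\partial_t g_t\bigr).
\end{equation*}
I will then show algebraically that $\Delta g_t+\partial_t g_t=2v\,\Gamma_2^\psi(v)$.

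\textbf{Step 1: computing $\partial_t g_t$.} By Lemma \ref{chain psi}, $g_t=\psi'(1)\Delta v-v\,\Delta^\psi v$. Since $v(t,\cdot)=P_{T-t}f$, property 4 of Proposition \ref{heat} gives $\partial_t v=-\Delta v$. Hence
\begin{equation*}
\partial_t g_t=-\psi'(1)\Delta^2 v+\Delta v\,\Delta^\psi v-v\,\partial_t(\Delta^\psi v).
\end{equation*}
Since $\Delta$ is a finite sum at each vertex, I can differentiate under it. For $y\sim x$,
\begin{equation*}
\partial_t\frac{v(y)}{v(x)}=-\frac{v(y)}{v(x)}\Bigl(\frac{\Delta v(y)}{v(y)}-\frac{\Delta v(x)}{v(x)}\Bigr),
\end{equation*}
so $\partial_t\Delta^\psi v=-\Omega^\psi v$ exactly. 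This yields
\begin{equation*}
\partial_t g_t=-\psi'(1)\Delta^2 v+\Delta v\,\Delta^\psi v+v\,\Omega^\psi v.
\end{equation*}

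\textbf{Step 2: the algebraic identity.} From the formula for $g_t$ we have $\Delta g_t=\psi'(1)\Delta^2 v-\Delta(v\,\Delta^\psi v)$. Adding this to Step 1, the $\Delta^2v$ terms cancel and
\begin{equation*}
\Delta g_t+\partial_t g_t=v\,\Omega^\psi v+\Delta v\,\Delta^\psi v-\Delta(v\,\Delta^\psi v)=2v\,\Gamma_2^\psi(v),
\end{equation*}
by the definition of $\Gamma_2^\psi$.

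\textbf{Step 3: justifying the product rule (main obstacle).} The difficulty is doing this on an infinite graph, where the interchange of limits is not automatic. I split the difference quotient as
\begin{equation*}
\frac{P_{t+h}g_{t+h}-P_tg_t}{h}=P_{t+h}\Bigl(\frac{g_{t+h}-g_t}{h}\Bigr)+\frac{P_{t+h}-P_t}{h}\,g_t .
\end{equation*}
\begin{itemize}
\item \emph{Second term.} Lemma \ref{uniform} gives $g_t\in\ell^\infty$, and $\Delta$ is bounded on $\ell^\infty$. So $P_t=e^{t\Delta}$ is norm-differentiable, and this term converges to $\Delta P_t g_t=P_t\Delta g_t$.
\item \emph{First term, pointwise limit.} By the mean value theorem, $(g_{t+h}-g_t)/h\to\partial_t g_t$ pointwise in $x$.
\item \emph{First term, uniform bound.} I need $\sup_{s\in[0,T]}\|\partial_s g_s\|_\infty<\infty$. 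This follows from Lemma \ref{uniform}: it bounds $v\,\Omega^\psi v$ and $\Delta^\psi v$. In addition, $\|\Delta v\|_\infty\le 2D_\mu\|f\|_\infty$ and $\|\Delta^2 v\|_\infty\le 4D_\mu^2\|f\|_\infty$.
\item \emph{First term, conclusion.} Write $P_{t+h}=P_t+(P_{t+h}-P_t)$. The correction has operator norm $O(h)$ applied to a uniformly bounded quotient, so it vanishes in the limit. For the $P_t$ part I use the kernel representation \eqref{semigroup and kernel}, the summability $\sum_y\mu(y)p(t,x,y)=1$ (property 8), and dominated convergence.
\end{itemize}
At $t=0$ and $t=T$ the same argument gives one-sided derivatives. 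Combining the limits with Step 2 gives $\partial_t\phi=2P_t\bigl(v\,\Gamma_2^\psi(v)\bigr)$ on $[0,T]\times V$.
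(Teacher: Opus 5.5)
Your proof is correct and follows essentially the same route as the paper, whose core computation is likewise $\partial_t\Delta^\psi v=-\Omega^\psi v$ followed by the definition of $\Gamma_2^\psi$. The differences are in bookkeeping and rigor only: the paper first discards $\psi'(1)P_t\Delta v=\psi'(1)\Delta P_Tf$ as $t$-independent and justifies the interchange by termwise differentiation of the heat kernel series, whereas you carry that term along (its $\Delta^2 v$ contributions cancel) and justify the product rule by the explicit difference-quotient split, which is if anything more carefully argued.
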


\begin{proof}
    By Proposition \ref{heat} (4) we have 
    \begin{equation}
         P_t\left(\Delta v\right)=\Delta\left(P_t v\right)=\Delta\left(P_{T}f\right).
    \label{delta P v}
    \end{equation}
    Using Lemma \ref{chain psi} to replace $\Gamma^{\psi}\left(v\right)$, we obtain
    \begin{equation*}
    \begin{aligned}
         \phi\left(t,x\right)&=-P_t\left(v\cdotp \Delta^{\psi}v\right)\left(x\right)+\psi^{\prime}\left(1\right)P_t\Delta v\left(x\right)\\
         &=-P_t\left(v\cdotp \Delta^{\psi}v\right)\left(x\right)+\psi^{\prime}\left(1\right)\Delta\left(P_{T}f\right)\left(x\right).
    \end{aligned}
    \end{equation*}
    Since the second term is independent of $t$, we have
    \begin{equation*}
        \partial_t \phi\left(t,x\right)=-\partial_{t}P_t\left(v\cdotp \Delta^{\psi}v\right)\left(x\right). 
    \end{equation*}
    In terms of the heat kernel, this semigroup can be written as 
    \begin{equation*}
        P_t\left(v\cdotp \Delta^{\psi}v\right)\left(x\right)= \sum_{y\in V}\mu\left(y\right)p\left(t,x,y\right)\left(v\cdotp \Delta^{\psi}v\right)\left(y\right). 
    \end{equation*}
    To differentiate termwise we also need uniform convergence of the termwise differentiated series. From Proposition \ref{heat} (9), the termwise derivative is
    \begin{equation*}
        \sum_{y\in V}\mu\left(y\right)\left[\Delta_{x}p\left(t,x,y\right)\left(v\cdotp \Delta^{\psi}v\right)\left(y\right)+p\left(t,x,y\right)\partial_{t}\left(v\cdotp \Delta^{\psi}v\right)\left(y\right)\right]. 
    \end{equation*}
    Since $v\cdotp \Delta^{\psi}v$ is uniformly bounded on $[0,T]\times V$, we may interchange the Laplacian with the summation to obtain
    \begin{equation*}
    \begin{aligned}
        &\sum_{y\in V}\mu(y)\Delta_{x}p(t,x,y)\left(v\cdotp \Delta^{\psi}v\right)(y)\\&=\Delta\left[\sum_{y\in V}\mu(y)p(t,x,y)\left(v\cdotp \Delta^{\psi}v\right)(y)\right]=\Delta P_{t}\left(v\cdotp \Delta^{\psi}v\right)(x)\\&=P_{t}\Delta \left(v\cdotp \Delta^{\psi}v\right)(x)=\sum_{y\in V}\mu(y)p(t,x,y)\Delta \left(v\cdotp \Delta^{\psi}v\right)(y).
    \end{aligned} 
    \end{equation*}
    where we also used the commutativity of $P_{t}$ and $\Delta$ from Proposition \ref{heat}. From the definition of $\Delta^{\psi}$, we compute
    \begin{equation*}
    \begin{aligned}
        \partial_t\left(\Delta^{\psi}v\right)\left(y\right)&=\Delta\partial_t\left[\psi \left(\frac{v}{v\left(y\right)}\right)\right]\left(y\right)\\
        &=\Delta\left[\psi^{\prime}\left(\frac{v}{v\left(y\right)}\right)\cdotp\frac{v}{v\left(y\right)}\cdotp\left(\frac{-\Delta v}{v}+\frac{\Delta v\left(y\right)}{v\left(y\right)}\right)
        \right]\left(y\right)\\
        &=-\left(\Omega^{\psi}v\right)\left(y\right).
    \end{aligned}
    \end{equation*}
    Thus, we conclude that $ \partial_t\left( \Delta^{\psi}v\right)=-\Omega^{\psi}v$. Using
    \begin{equation*}
        \partial_t\left(v\cdotp \Delta^{\psi}v\right)=-\Delta v\cdotp \Delta^{\psi}v-v\cdotp\Omega^{\psi}v,
    \end{equation*}
    and the boundedness of $\Delta v, \Delta^{\psi}v$ and $v\Omega^{\psi}v$, the differentiated series also converges uniformly in $(t,x)$. Hence, differentiation and summation can be exchanged, giving
    \begin{equation*}
        \partial_{t}P_t\left(v\cdotp \Delta^{\psi}v\right)\left(x\right)=\sum_{y\in V}\mu\left(y\right)p\left(t,x,y\right)\left[\Delta\left(v\cdotp \Delta^{\psi}v\right)-\Delta v\cdotp \Delta^{\psi}v-v\cdotp\Omega^{\psi}v\right]\left(y\right),
    \end{equation*}
    Consequently, we have
    \begin{equation*}
        \partial_t \phi\left(t,x\right)=-P_{t}\left(\Delta\left(v\cdotp \Delta^{\psi}v\right)-\Delta v\cdotp \Delta^{\psi}v-v\cdotp\Omega^{\psi}v\right)(x)=2P_t\left(v\cdotp\Gamma_2^{\psi}v\right)\left(x\right),
    \end{equation*}
    where the last equality follows from the definition of $\Gamma_{2}^{\psi}$, namely,
    \begin{equation*}
        2v\cdotp\Gamma_2^{\psi}v=-\Delta\left(v\cdotp \Delta^{\psi}v\right)+\Delta v\cdotp \Delta^{\psi}v+v\cdotp\Omega^{\psi}v. 
    \end{equation*}
    Recalling the definition of $v$, we obtain the desired identity. The uniform boundedness used above follows from Lemma \ref{uniform}, which guarantees the uniform convergence of both series and hence justifies the interchange; this completes the proof.
\end{proof}

\begin{theorem}
    Let $G=\left(V,E\right)$ be a locally finite weighted graph satisfying the $CD\psi\left(n,-K\right)$ condition with $K\geq0$, where $\psi\in C^{1}\left(0,+\infty\right)$ is a concave function. Then for any $0<f\in \ell^{\infty}\left(V\right)$ with $R_{f}<\infty$, 
    \begin{equation}
         \partial_t\left(\alpha \phi\right)\geq\left(\alpha^{\prime}-\frac{4\alpha\gamma}{n}-2\alpha K\right)\phi+\frac{4\psi^{\prime}\left(1\right)\alpha\gamma}{n}\Delta P_T f-\frac{2\alpha\gamma^2}{n} P_T f,  
    \label{variational ineq}
    \end{equation}
     where $\phi$ is defined as in \eqref{phi def}, $\alpha\in C^{1}\left[0,T\right]$ is positive,  and $\gamma\in C\left[0,T\right]$.
\label{variational th}
\end{theorem}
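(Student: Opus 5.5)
By the product rule, $\partial_t(\alpha\phi)=\alpha'\phi+\alpha\,\partial_t\phi$. Lemma \ref{partial} then gives
\begin{equation*}
\partial_t(\alpha\phi)=\alpha'\phi+2\alpha P_t\left(v\cdot\Gamma_2^{\psi}(v)\right).
\end{equation*}
The plan is to bound $\Gamma_2^{\psi}(v)$ from below pointwise using $CD\psi(n,-K)$. Because $v>0$ and $P_t$ is positivity preserving (Proposition \ref{heat}(2)), a pointwise inequality for $v\Gamma_2^\psi(v)$ transfers directly under $P_t$.

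At each point, the curvature-dimension condition gives
\begin{equation*}
\Gamma_2^\psi(v)\ge \frac1n(\Delta^\psi v)^2-K\Gamma^\psi(v).
\end{equation*}
The square is linearized with the elementary inequality $x^2\ge 2\gamma x-\gamma^2$, with $\gamma=\gamma(t)$, applied to $x=-\Delta^\psi v$. This yields
\begin{equation*}
(\Delta^\psi v)^2\ge -2\gamma\Delta^\psi v-\gamma^2.
\end{equation*}
Lemma \ref{chain psi} gives $-\Delta^\psi v=\Gamma^\psi(v)-\psi'(1)\frac{\Delta v}{v}$. Multiplying by $v$, we get
\begin{equation*}
v\Gamma_2^\psi(v)\ge \frac{2\gamma}{n}\left(v\Gamma^\psi(v)-\psi'(1)\Delta v\right)-\frac{\gamma^2}{n}v-Kv\Gamma^\psi(v).
\end{equation*}
This linearization loses nothing in sign, since $\gamma$ is an arbitrary continuous function.

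Next we apply $2\alpha P_t$ to this inequality, using that $\alpha>0$. By definition $P_t(v\Gamma^\psi(v))=\phi$. By \eqref{delta P v}, $P_t\Delta v=\Delta P_Tf$, and $P_tv=P_Tf$ by the semigroup property. We thus obtain
\begin{equation*}
\partial_t(\alpha\phi)\ge \alpha'\phi+\frac{4\alpha\gamma}{n}\phi-\frac{4\alpha\gamma\psi'(1)}{n}\Delta P_Tf-\frac{2\alpha\gamma^2}{n}P_Tf-2\alpha K\phi.
\end{equation*}

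This has the correct structure but the opposite signs on the $\gamma$-terms compared with \eqref{variational ineq}. The fix is to apply the linearization to $+\Delta^\psi v$ rather than $-\Delta^\psi v$, that is, to use $x^2\ge -2\gamma x-\gamma^2$ with $x=-\Delta^\psi v$ (equivalently, replace $\gamma$ by $-\gamma$). This produces exactly $-\frac{4\alpha\gamma}{n}\phi+\frac{4\psi'(1)\alpha\gamma}{n}\Delta P_Tf-\frac{2\alpha\gamma^2}{n}P_Tf$, which together with $\alpha'\phi-2\alpha K\phi$ is the stated inequality.

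The hypotheses of Lemmas \ref{uniform} and \ref{partial} are $0<f\in\ell^\infty$ and $R_f<\infty$, which are assumed here, so all quantities are bounded and every application of $P_t$ is legitimate. The only delicate point is justifying that $P_t$ may be applied to the inequality termwise, which needs each term to be bounded on $[0,T]\times V$. Lemma \ref{uniform} supplies this, including for $v(\Delta^\psi v)^2$, since $\Delta^\psi v$ is bounded and $v\le\|f\|_\infty$. The argument needs no other estimates.
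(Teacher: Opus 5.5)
Your proof is correct and is essentially the paper's argument: Lemma \ref{partial}, the pointwise $CD\psi(n,-K)$ bound, the linearization $a^2\ge 2\gamma a-\gamma^2$ with $a=\Delta^\psi v$, Lemma \ref{chain psi} to rewrite $v\Delta^\psi v$, and then $P_t\Delta v=\Delta P_Tf$, $P_tv=P_Tf$, with boundedness supplied by Lemma \ref{uniform}. The initial sign detour is harmless, since it already gives the inequality with $-\gamma$ in place of the arbitrary $\gamma$, but you can simply take $a=\Delta^\psi v$ from the start as the paper does.
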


\begin{proof}
    By Lemma \ref{partial}, we obtain
    \begin{equation*}
    \begin{aligned}
        \partial_t\left(\alpha\phi\right)\left(x\right)&=\alpha^{\prime}\phi\left(x\right)+2\alpha P_t\left[v\cdotp\Gamma_2^{\psi}v\right]\left(x\right)\\
        &\geq \alpha^{\prime}\phi\left(x\right)+2\alpha P_t\left[\frac{1}{n}v\left(\Delta^{\psi}v\right)^{2}-Kv\cdotp \Gamma^{\psi}v\right]\left(x\right)\\
        &=\left(\alpha^{\prime}-2\alpha K\right)\phi\left(x\right)+\frac{2\alpha}{n}\sum_{y\in V}\mu\left(y\right)p\left(t,x,y\right)v\left(y\right)\left(\Delta^{\psi}v\right)^{2}\left(y\right),
    \end{aligned}
    \end{equation*}
    In the second line, we used the $CD\psi(n,-K)$ condition. For the quadratic term we apply the elementary inequality $a^{2}\ge2\gamma a-\gamma^{2}$ (with $a=\Delta^{\psi}v$ and multiplying by $v>0$) to get 
    \begin{equation*}
        v\left(y\right)\left(\Delta^{\psi}v\right)^{2}\left(y\right)\geq2\gamma v\left(y\right)\cdotp\Delta^{\psi}v\left(y\right)-\gamma^2 v\left(y\right). 
    \end{equation*}
    By Lemma \ref{chain psi}, $v\cdotp\Delta^{\psi}v=\psi^{\prime}\left(1\right)\Delta v-v\cdotp\Gamma^{\psi}v$. Substituting this into the right-hand side yields
    \begin{equation*}
        v\left(y\right)\left(\Delta^{\psi}v\right)^{2}\left(y\right)\geq2\gamma\big{[}\psi^{\prime}\left(1\right)\Delta v\left(y\right)-v\left(y\right)\cdotp\Gamma^{\psi}v\left(y\right)\big{]}-\gamma^2 v\left(y\right). 
    \end{equation*}
    Inserting this estimate into the sum, we obtain
    \begin{equation*}
    \begin{aligned}
        &\sum_{y\in V}\mu\left(y\right)p\left(t,x,y\right)v\left(y\right)\left(\Delta^{\psi}v\right)^{2}\left(y\right)\\
        &\ge 2\psi^{\prime}(1)\gamma\Delta P_{T}f(x)-2\gamma\phi(x)-\gamma^{2}P_{T}f(x). 
    \end{aligned}
    \end{equation*}
    Here we have used \eqref{delta P v} together with the definition of $\phi$. Multiplying this inequality by $\frac{2\alpha}{n}$ yields the desired estimate, which completes the proof.

\end{proof}

\section{A family of Li-Yau inequalities}

\begin{theorem}
   Let $G=\left(V,E\right)$ be a locally finite weighted graph satisfying the $CD\psi\left(n,-K\right)$ condition with $K\geq0$, where $\psi\in C^{1}\left(0,+\infty\right)$ is a concave function. Assume that $W\in C^{1}[0,T)\cap C[0,T]$ satisfies $W\left(0\right)=1, W\left(T\right)=0, W>0 \ on\ [0,T)$ and $W^{\prime}\in L^{2}(0,T)$. Then for any $0<f\in \ell^{\infty}\left(V\right)$, 
    \begin{equation*}
    \begin{aligned}
        \Gamma^{\psi}\left(P_T f\right)&\leq\psi^{\prime}\left(1\right)\left(1+2K\int_{0}^{T}W\left(s\right)^2ds\right)\frac{\Delta P_T f}{P_T f}\\
        &+\frac{n}{2}\left(\int_{0}^{T}W^\prime\left(s\right)^2ds+K^2\int_{0}^{T}W\left(s\right)^2ds+K\right).
    \end{aligned} 
    \end{equation*}
\label{zu}
\end{theorem}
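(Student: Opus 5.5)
We integrate the variational inequality of Theorem \ref{variational th} over $[0,T]$ with a well-chosen pair $(\alpha,\gamma)$ built from $W$. We first assume $R_f<\infty$, and remove this restriction at the end by approximation.

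\textbf{Step 1: the boundary values of $\phi$.} At $t=0$ we have $\phi(0,x)=P_Tf\cdot\Gamma^\psi(P_Tf)(x)$. At $t=T$ we have $\phi(T,x)=P_T(f\Gamma^\psi f)(x)\ge0$. Only the sign at $t=T$ will matter.

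\textbf{Step 2: choice of $\alpha$ and $\gamma$.} Take $\alpha=W^2$, so that $\alpha(0)=1$ and $\alpha(T)=0$. Aim for a coefficient of $\phi$ that is nonpositive, so that the $\phi$ term can be discarded. Since $\alpha'=2WW'$, the coefficient of $\phi$ is
\[
2WW'-\frac{4W^2\gamma}{n}-2W^2K .
\]
Choose
\[
\gamma=\frac{n}{2}\Big(\frac{W'}{W}-K\Big),
\]
which makes this coefficient exactly zero.

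\textbf{Step 3: integration.} Integrating $\partial_t(\alpha\phi)$ from $0$ to $T$ gives
\[
\alpha(T)\phi(T)-\alpha(0)\phi(0)=-\phi(0).
\]
The right-hand side of \eqref{variational ineq} becomes
\[
\frac{4\psi'(1)}{n}\Delta P_Tf\int_0^T W^2\gamma\,ds-\frac{2}{n}P_Tf\int_0^T W^2\gamma^2\,ds .
\]
For the first integral,
\[
W^2\gamma=\frac{n}{2}\big(WW'-KW^2\big),\qquad \int_0^T WW'\,ds=-\frac12 ,
\]
so $\int_0^T W^2\gamma\,ds=-\frac n4-\frac{nK}{2}\int_0^T W^2\,ds$. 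For the second integral,
\[
W^2\gamma^2=\frac{n^2}{4}\big(W'^2-2KWW'+K^2W^2\big),
\]
so $\int_0^T W^2\gamma^2\,ds=\frac{n^2}{4}\big(\int_0^T W'^2\,ds+K+K^2\int_0^T W^2\,ds\big)$.

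Combining, we obtain
\[
-P_Tf\,\Gamma^\psi(P_Tf)\ge -\psi'(1)\Big(1+2K\!\int_0^T W^2\Big)\Delta P_Tf-\frac n2\Big(\int_0^T W'^2+K+K^2\!\int_0^T W^2\Big)P_Tf .
\]
Dividing by $P_Tf>0$ gives exactly the claimed estimate.

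\textbf{Main obstacle: regularity of $\gamma$ at $T$.} Theorem \ref{variational th} requires $\alpha\in C^1[0,T]$ and $\gamma\in C[0,T]$. Here $\gamma$ may blow up at $T$ because $W(T)=0$, and $W'$ is only $C^1$ on $[0,T)$. To handle this, integrate only over $[0,T-\varepsilon]$. Nonnegativity of $\phi$ makes the boundary term $\alpha(T-\varepsilon)\phi(T-\varepsilon)$ harmless: it has the right sign and can be dropped. Then let $\varepsilon\to0$, using $W'\in L^2$ and monotone/dominated convergence for the integrals $\int W'^2$ and $\int WW'$; the latter converges to $\frac12\big(W(T)^2-1\big)=-\frac12$ by continuity of $W$.

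\textbf{Removing $R_f<\infty$.} Approximate a general $0<f\in\ell^\infty$ by $f+\delta$, and if needed by truncations. A cleaner route is to apply the result to $P_sf$ for small $s>0$, since $P_sf$ has finite $R$ by the bounds of Lemma \ref{uniform}-type arguments combined with strict positivity of the heat kernel on neighbors. Then pass to the limit pointwise, using local finiteness and continuity of $\psi$.
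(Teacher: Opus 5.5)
Your choice $\alpha=W^2$, $\gamma=\frac n2\bigl(\frac{W'}{W}-K\bigr)$ (which kills the coefficient of $\phi$) is the paper's, and your algebra is correct. So are the truncation to $[0,T-\varepsilon]$, the evaluation of the integrals, and the reduction via $f+\delta$. However, the way you dispose of the endpoint term fails as written. Integrating $\partial_t(W^2\phi)\ge R(t)$ over $[0,T-\varepsilon]$ gives
\[
P_Tf\cdot\Gamma^\psi(P_Tf)=\phi(0)\le W(T-\varepsilon)^2\,\phi(T-\varepsilon)-\int_0^{T-\varepsilon}R(t)\,dt .
\]
So the boundary term sits on the upper-bound side. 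Its nonnegativity is exactly what prevents you from dropping it: discarding it would assert a stronger inequality than the one you have.

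What is needed is $W(T-\varepsilon)^2\phi(T-\varepsilon,x)\to0$, i.e.\ that $\phi$ stays bounded as $t\to T$; its sign is irrelevant, contrary to your Step 1 remark. This boundedness is supplied by Lemma \ref{uniform}. For $R_f<\infty$, $v\cdot\Gamma^\psi(v)$ is uniformly bounded on $[0,T]\times V$, hence so is $\phi=P_t\bigl(v\cdot\Gamma^\psi(v)\bigr)$, and $W(T-\varepsilon)\to0$ finishes the argument; this is what the paper does. This is precisely the place, together with Lemma \ref{partial}, where the assumption $R_f<\infty$ enters. Without it the boundedness of $v\cdot\Gamma^\psi(v)$ near $t=T$ is not available.

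The same sign slip appears in Step 2. A \emph{nonpositive} coefficient of $\phi\ge0$ on the right of a lower bound could not be discarded either. It is harmless only because you then make that coefficient exactly zero.

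For removing $R_f<\infty$, stay with $f+\delta$, as in the paper. Here $R_{f+\delta}\le(\|f\|_\infty+\delta)/\delta$ and $\Delta P_T(f+\delta)=\Delta P_Tf$. Moreover $\Gamma^\psi(P_Tf+\delta)\to\Gamma^\psi(P_Tf)$ pointwise by local finiteness and continuity of $\overline{\psi}$.

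Your suggested ``cleaner route'' through $P_sf$ is not valid in general. The heat kernel has Poisson-type off-diagonal decay, roughly $s^d/d!$ at distance $d$. So when $\inf f=0$ and $f$ decays fast, the neighbour ratios of $P_sf$ can grow like $d/s$, giving $R_{P_sf}=\infty$.
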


\begin{proof}
    We first assume that $f$ satisfies $R_{f}<\infty$. The general case will be handled by an approximation argument at the end. Fix $\delta\in(0,T)$ and work on the interval $[0,T-\delta]$. Choose the parameters $\alpha$ and $\gamma$ in the variational inequality of Theorem \ref{variational th} as follows. Set
    \begin{equation*}
        \gamma=\frac{n}{4}\left(\frac{\alpha^{\prime}}{\alpha}-2K\right),
    \end{equation*}
    which guarantees
    \begin{equation*}
        \alpha^{\prime}-\frac{4\alpha\gamma}{n}-2\alpha K=0.
    \end{equation*}
    Set $\alpha:=W^2$. Substituting this choice into \eqref{variational ineq}, we obtain
    \begin{equation*} 
    \begin{aligned}
    \partial_t\left(W^2\phi\right)\ge \psi^{\prime}\left(1\right)\left(2WW^{\prime}-2KW^{2}\right)\Delta P_T f\\
    -\frac{n}{2}\left(\left(W^{\prime}\right)^2-2KWW^{\prime}+K^2W^2\right)P_T f.
    \end{aligned}
    \end{equation*}
    Integrate this inequality with respect to $t$ over $[0,T-\delta]$. Using $ \phi\left(0\right)=P_T f\cdotp\Gamma^{\psi}\left(P_T f\right)$ and $W\left(0\right)=1$, we get
    \begin{equation*}
    \begin{aligned}       
        &W\left(T-\delta\right)^{2}\phi\left(T-\delta\right)-P_T f\cdotp\Gamma^{\psi}\left(P_T f\right)\\
        &\geq\psi^{\prime}\left(1\right)\left(W\left(T-\delta\right)^{2}-1-2K\int_{0}^{T-\delta}W\left(s\right)^2ds\right)\Delta P_T f \\
        &-\frac{n}{2}\left[\int_{0}^{T-\delta}W^\prime\left(s\right)^2ds+K^2\int_{0}^{T-\delta}W\left(s\right)^2ds-KW\left(T-\delta\right)^{2}+K\right]P_T f. 
    \end{aligned}
    \end{equation*}
    By Lemma \ref{uniform}, $\phi$ is uniformly bounded on $[0,T]\times V$. Since $W(T)=0$, letting $\delta\rightarrow0$ gives $W\left(T-\delta\right)^{2}\phi\left(T-\delta\right)\rightarrow0$. Passing to the limit yields the desired estimate, which proves the theorem under the additional condition  $R_{f}<\infty$.
    
    Now, let $f$ be an arbitrary positive function in $ \ell^{\infty}\left(V\right)$. For $\epsilon>0$ set $f_\epsilon=f+\epsilon$. Clearly, $R_{f_{\epsilon}}<\infty$. Applying the estimate already established to $f_{\epsilon}$ gives
    \begin{equation}
    \begin{aligned}
        \Gamma^{\psi}\left(P_T f_\epsilon\right)\leq&\psi^{\prime}\left(1\right)\left(1+2K\int_{0}^{T}W\left(s\right)^2ds\right)\frac{\Delta P_T f_\epsilon}{P_T f_\epsilon}\\
        &+\frac{n}{2}\left(\int_{0}^{T}W^\prime\left(s\right)^2ds+K^2\int_{0}^{T}W\left(s\right)^2ds+K\right).
    \label{gradient ep}
    \end{aligned} 
    \end{equation}
    By Proposition \ref{heat} (3), $ P_T f_\epsilon=P_{T}f+\varepsilon\rightarrow P_{T}f$ as $\epsilon\rightarrow 0$. Moreover, from the definitions of $\Delta$ and $\Gamma^{\psi}$, 
    \begin{equation*}
        \Delta P_T f_\epsilon= \Delta P_T f,\ \  \Gamma^{\psi}\left(P_T f_\epsilon\right)\rightarrow\Gamma^{\psi}\left(P_T f\right)\ \  \text{as} \ \epsilon\rightarrow 0.
    \end{equation*}
    Let $\epsilon\rightarrow 0$ in \eqref{gradient ep} therefore yield the desired inequality for $f$, and the proof is complete.
\end{proof}

\begin{theorem}
    Let $G=\left(V,E\right)$ be a locally finite weighted graph satisfying the $CD\psi\left(n,-K\right)$ condition with $K\geq0$, where $\psi\in C^{1}\left(0,+\infty\right)$ is a concave function. Let $b>\frac{1}{2}$ be a constant. Then for any $0<f\in \ell^{\infty}\left(V\right)$, 
    \begin{equation*}
        \Gamma ^\psi\left(P_t f\right)\leq \psi^\prime \left(1\right)\left(1+\frac{2Kt}{2b+1}\right)  \frac{\Delta P_t f}{P_t f}+\frac{n}{2} \left(\frac{b^2}{\left(2b-1\right)t}+\frac{K^2 t}{2b+1}+K\right).
    \end{equation*} 
\label{class li-yau b}
\end{theorem}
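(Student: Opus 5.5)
Theorem \ref{class li-yau b} is a direct specialization of Theorem \ref{zu}: rename $t$ as $T$ and choose the power-type auxiliary function
\begin{equation*}
    W(s):=\left(1-\frac{s}{T}\right)^{b},\qquad s\in[0,T].
\end{equation*}
Since $b>\frac12>0$, we have $W\in C^{1}[0,T)\cap C[0,T]$, $W(0)=1$, $W(T)=0$ and $W>0$ on $[0,T)$. The remaining hypothesis is $W'\in L^{2}(0,T)$. Here
\begin{equation*}
    W'(s)=-\frac{b}{T}\left(1-\frac{s}{T}\right)^{b-1},
\end{equation*}
so $(W')^2$ is a multiple of $(1-s/T)^{2b-2}$. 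This is integrable near $s=T$ exactly when $2b-2>-1$, that is, when $b>\frac12$. This is the only point where the restriction on $b$ enters, and it is also where I expect the only subtlety: I would verify integrability explicitly, since for $b\le\frac12$ the choice fails.

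The main step is computing the two integrals in Theorem \ref{zu} with the substitution $u=1-s/T$, $ds=T\,du$. The first is
\begin{equation*}
    \int_0^T W(s)^2\,ds=T\int_0^1u^{2b}\,du=\frac{T}{2b+1}.
\end{equation*}
The second is
\begin{equation*}
    \int_0^T W'(s)^2\,ds=\frac{b^2}{T^2}\cdot T\int_0^1u^{2b-2}\,du=\frac{b^2}{(2b-1)T}.
\end{equation*}

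Substituting these into Theorem \ref{zu} gives the following three terms:
\begin{equation*}
    1+2K\int_0^TW^2\,ds=1+\frac{2KT}{2b+1},\qquad K^2\int_0^TW^2\,ds=\frac{K^2T}{2b+1},\qquad \int_0^T(W')^2\,ds=\frac{b^2}{(2b-1)T}.
\end{equation*}
These are exactly the terms in the claimed inequality with $t=T$. Since Theorem \ref{zu} holds for every $T>0$ and every $0<f\in\ell^\infty(V)$, the approximation for general $f$ is already built into that theorem, and no further argument is needed.

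As a sanity check, setting $b=1$ gives coefficient $1+\frac{2Kt}{3}$ and constant $\frac{n}{2}\left(\frac1t+\frac{K^2t}{3}+K\right)$, which matches \eqref{Bakry-Qian graph}.
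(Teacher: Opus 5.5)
Your proposal is correct and matches the paper's proof: the paper also applies Theorem~\ref{zu} with $W(t)=(1-t/T)^{b}$, computes $\int_0^T W^2\,ds=\frac{T}{2b+1}$ and $\int_0^T (W')^2\,ds=\frac{b^2}{(2b-1)T}$, and renames $T$ as $t$. You also check explicitly that $W'\in L^2(0,T)$ exactly when $b>\frac12$, which the paper leaves implicit; this is the correct justification for invoking Theorem~\ref{zu}.
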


\begin{proof}
    We apply Theorem \ref{zu} with the choice
    \begin{equation*}
        W\left(t\right):=\left(1-\frac{t}{T}\right)^{b}.
    \end{equation*}
    A direct computation gives
    \begin{equation*}
        \int_{0}^{T}W\left(s\right)^2ds=\frac{T}{2b+1},\ \ \ \ \ \ \ \int_{0}^{T}W^\prime\left(s\right)^2ds=\frac{b^2}{\left(2b-1\right)T}. 
    \end{equation*}
    Substituting these expressions into the estimate of Theorem \ref{zu} yields, 
    \begin{equation*}
        \Gamma ^\psi\left(P_T f\right)\leq \psi^\prime \left(1\right)\left(1+\frac{2KT}{2b+1}\right)  \frac{\Delta P_T f}{P_T f}+\frac{n}{2} \left(\frac{b^2}{\left(2b-1\right)T}+\frac{K^2 T}{2b+1}+K\right).
    \end{equation*} 
    Since $T$ is arbitrary, we may replace $T$ with $t$ to obtain the estimate claimed.
\end{proof}

\begin{remark}
    The family of gradient estimates given in Theorem \ref{class li-yau b} recovers two previously known inequalities. Taking $b=1$ yields \eqref{Bakry-Qian graph}. Setting further $K=0$ reduces it to \eqref{tu li-yau 2}. 
\end{remark}

We now use the gradient estimates obtained above to derive a Harnack inequality for the heat kernel.
\begin{lemma}
    Let $G=(V,E)$ be a locally finite weighted graph and let $\psi\in C^1(0,+\infty)$ be a concave function with $H_{\psi}<\infty$. Suppose $0<g\in \mathbb{R}^{V}$. Then for any $x\sim y$, 
    \begin{equation*}
        \log\frac{g\left(y\right)}{g\left(x\right)}\leq\sqrt{H_\psi \frac{\mu_{\max}}{w_{\min}}}\sqrt{\Gamma^{\psi}\left(g\right)\left(x\right)}.
    \end{equation*}
\label{x sim y}
\end{lemma}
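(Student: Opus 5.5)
The argument uses only three facts: the definition of $\Gamma^{\psi}$ as $\Delta^{\overline{\psi}}$, the nonnegativity of $\overline{\psi}$, and the definition of $H_\psi$.

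\emph{Reduction.} Fix $x\sim y$ and set $s:=g(y)/g(x)$. If $s\le 1$, then $\log s\le 0$, and the claim holds because the right-hand side is nonnegative ($\Gamma^{\psi}\ge0$). So we may assume $s>1$.

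\emph{Lower bound for $\Gamma^{\psi}(g)(x)$ by a single edge.} By definition,
\begin{equation*}
\Gamma^{\psi}(g)(x)=\frac{1}{\mu(x)}\sum_{z\sim x}w_{xz}\left[\overline{\psi}\left(\frac{g(z)}{g(x)}\right)-\overline{\psi}(1)\right].
\end{equation*}
Since $\overline{\psi}(1)=0$ and $\overline{\psi}\ge0$, every summand is nonnegative. Dropping all terms except $z=y$ and using $w_{xy}\ge w_{\min}$ and $\mu(x)\le\mu_{\max}$ gives
\begin{equation*}
\Gamma^{\psi}(g)(x)\ge\frac{w_{xy}}{\mu(x)}\,\overline{\psi}(s)\ge\frac{w_{\min}}{\mu_{\max}}\,\overline{\psi}(s).
\end{equation*}

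\emph{Use of the Harnack constant.} For $s>1$, the definition of $H_\psi$ gives $(\log s)^2\le H_\psi\,\overline{\psi}(s)$. Combining this with the previous bound,
\begin{equation*}
(\log s)^2\le H_\psi\frac{\mu_{\max}}{w_{\min}}\,\Gamma^{\psi}(g)(x).
\end{equation*}
Since $\log s>0$, taking square roots yields the claim.

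There is no real obstacle. The only points needing care are the sign case $s\le1$, where the definition of $H_\psi$ does not apply, and the observation that the nonnegativity of the other summands is what allows them to be dropped.
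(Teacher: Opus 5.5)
Your proposal is correct and follows essentially the same route as the paper: keep only the $z=y$ term of $\Gamma^{\psi}(g)(x)$, bound $w_{xy}/\mu(x)$ below by $w_{\min}/\mu_{\max}$, and use the definition of $H_\psi$ for $s>1$, with $s\le 1$ handled by $\Gamma^{\psi}\ge 0$. The only cosmetic difference is that the paper first packages both cases into the single inequality $\log s\le\sqrt{H_\psi}\sqrt{\overline{\psi}(s)}$ for all $s>0$, whereas you split the cases at the outset.
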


\begin{proof}
Lemma 5.2 in \cite{Munch} treats the special case $\mu \equiv 1$ and $w\equiv 1$.  We give a proof for the general case here. First, we claim that for any $s>0$,
\begin{equation*}
    \log s\leq \sqrt{H_\psi}\sqrt{\overline{\psi}\left(s\right)}.
\end{equation*}
Indeed, the inequality is trivial when $s\le 1$, and for $s>1$ it follows directly from the definition of $H_{\psi}$. Using this claim and the definition of $\Gamma^{\psi}$, we obtain
\begin{equation*}
\begin{aligned}
    \sqrt{\Gamma^{\psi}\left(g\right)\left(x\right)}
    &=\sqrt{\Delta\overline{\psi}\left(\frac{g} {g\left(x\right)}\right)\left(x\right)}
    =\sqrt{\frac{1}{\mu\left(x\right)}\sum_{z\sim x}w_{xz}\overline{\psi}\left(\frac{g\left(z\right)}{g\left(x\right)}\right)}\\
    &\geq\sqrt{\frac{w_{\min}}{\mu_{\max}}}\sqrt{\overline{\psi}\left(\frac{g\left(y\right)}{g\left(x\right)}\right)}
    \geq\sqrt{\frac{w_{\min}}{H_{\psi}\mu_{\max}}} \log\frac{g\left(y\right)}{g\left(x\right)}.
\end{aligned}
\end{equation*}
Rearranging gives the desired inequality.
\end{proof}

\begin{lemma}
For any $a>0,b\ge0$ and any continuous function $\gamma:[T_1,T_2]\rightarrow[0,+\infty)$, the following inequality holds
\begin{equation*}
    \min_{s\in [T_1,T_2]}\left( b\sqrt{\gamma\left(s\right)}-a\int_{s}^{T_2}\gamma\left(t\right)dt\right)\leq \frac{b^2}{a\left(T_2-T_1\right)}.
\end{equation*}
\label{t1-t2}
\end{lemma}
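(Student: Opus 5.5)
We argue by contradiction, using the elementary fact that a quadratic in $\sqrt{\gamma}$ is bounded above. Set $L:=T_2-T_1>0$ and $m:=\min_{s\in[T_1,T_2]}\bigl(b\sqrt{\gamma(s)}-a\int_s^{T_2}\gamma(t)\,dt\bigr)$. The minimum exists because the expression is continuous in $s$. At $s=T_2$ the expression equals $b\sqrt{\gamma(T_2)}\ge 0$, so $m$ is finite. If $m\le 0$ there is nothing to prove, since the right-hand side $b^2/(aL)$ is nonnegative. We may therefore assume $m>0$, which forces $b>0$.

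The key step is a pointwise lower bound on $\gamma$. Since $m>0$, for every $s\in[T_1,T_2]$ we have
\begin{equation*}
    b\sqrt{\gamma(s)}\ \ge\ m + a\int_s^{T_2}\gamma(t)\,dt .
\end{equation*}
Write $F(s):=\int_s^{T_2}\gamma(t)\,dt$. Then $F$ is $C^1$ with $F'=-\gamma$ and $F(T_2)=0$, and the inequality becomes
\begin{equation*}
    -F'(s)=\gamma(s)\ \ge\ \frac{(m+aF(s))^2}{b^2}.
\end{equation*}
This is a differential inequality of Riccati type.

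Now set $G(s):=m+aF(s)$, which satisfies $G\ge m>0$. Then
\begin{equation*}
    \frac{d}{ds}\Bigl(\frac{1}{G(s)}\Bigr)=-\frac{aF'(s)}{G(s)^2}\ \ge\ \frac{a}{b^2}.
\end{equation*}
Integrating from $T_1$ to $T_2$ and using $G(T_2)=m$ gives
\begin{equation*}
    \frac{1}{m}\ \ge\ \frac1m-\frac{1}{G(T_1)}\ \ge\ \frac{aL}{b^2}.
\end{equation*}
Hence $m\le b^2/(aL)$, which is the claim.

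The only delicate point is choosing the right quantity to integrate, namely $1/(m+aF)$. A cruder approach, such as averaging $\sqrt{\gamma}$ over the interval, loses constants. Everything else is routine: continuity of $\gamma$ guarantees that the minimum exists and that $F$ is $C^1$, and the assumption $m>0$ guarantees that $G$ never vanishes.
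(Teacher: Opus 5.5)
Your proof is correct. The reduction to $m>0$ (which forces $b>0$), the Riccati-type inequality $\gamma(s)\ge (m+aF(s))^2/b^2$, and the integration of $(1/G)'\ge a/b^2$ over $[T_1,T_2]$ are all valid, and they even give the strict bound $m<b^2/(a(T_2-T_1))$ whenever $m>0$.

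The paper gives no argument of its own and cites Lemma 5.3 of M\"unch. That lemma rests on the same Riccati comparison for $s\mapsto\int_s^{T_2}\gamma(t)\,dt$: the threshold $b^2/(a(T_2-T_1))$ is exactly where the comparison solution with $F(T_2)=0$ blows up at $T_1$. So your argument is essentially that standard proof, written out directly.
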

\begin{proof}
We refer the reader to the proof of Lemma 5.3 in \cite{Munch}.
\end{proof}

\begin{lemma}
    Let $G=(V,E)$ be a locally finite weighted graph and let $\psi\in C^{1}(0,+\infty)$ be a concave function with $\psi^\prime \left(1\right)>0$ and $H_{\psi}<\infty$.  Assume that $u:V\times[T_{1},T_{2}]\rightarrow(0,+\infty)$ satisfies
    \begin{equation*}
        \left(1-\alpha\right)\Gamma^{\psi}\left(u\right)-\psi^{\prime}\left(1\right)\frac{\partial_t u}{u}\leq\frac{c_1}{t}+c_2,
    \end{equation*}
    where $\alpha\in[0,1), c_{1}>0$ and $c_{2}\ge0$. Then for any $x,y\in V$, 
    \begin{equation*}
        u\left(x,T_1\right)\leq u\left(y,T_2\right)\left(\frac{T_2}{T_1}\right)^{\frac{c_1}{\psi^{\prime}\left(1\right)}}\exp\left(\frac{c_2}{\psi^{\prime}\left(1\right)}\left(T_2-T_1\right)+\frac{\psi^{\prime}\left(1\right)H_{\psi}\mu_{\max}d^2\left(x,y\right)}{\left(1-\alpha\right)w_{\min}\left(T_2-T_1\right)} \right).
    \end{equation*}
\label{psi harnack}
\end{lemma}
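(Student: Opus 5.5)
We follow Münch's path-integration argument (Lemma 5.3 of \cite{Munch}), adapted to weights. First assume $d(x,y)=d$ and fix a shortest path $x=x_0\sim x_1\sim\cdots\sim x_d=y$. Partition $[T_1,T_2]$ into $d$ consecutive subintervals $[t_{i-1},t_i]$ of equal length $(T_2-T_1)/d$. On each subinterval we move from $x_{i-1}$ to $x_i$: we stay at $x_{i-1}$ from time $t_{i-1}$ until some time $s_i\in[t_{i-1},t_i]$, jump to $x_i$ at time $s_i$, and then stay at $x_i$ until $t_i$. The point $s_i$ will be chosen by Lemma \ref{t1-t2}. The case $x=y$ (i.e.\ $d=0$) is just the time integration along a constant path.

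Along a constant-in-space segment at vertex $z$, the hypothesis gives
\[
-\partial_t\log u(z,t)\le \frac{1}{\psi'(1)}\left(\frac{c_1}{t}+c_2-(1-\alpha)\Gamma^\psi(u)(z,t)\right).
\]
Integrating over $[t_{i-1},t_i]$ therefore yields the factor $(t_i/t_{i-1})^{c_1/\psi'(1)}e^{c_2(t_i-t_{i-1})/\psi'(1)}$, together with a negative term $-\frac{1-\alpha}{\psi'(1)}\int\Gamma^\psi(u)$. Multiplying these factors telescopes to $(T_2/T_1)^{c_1/\psi'(1)}\exp\left(c_2(T_2-T_1)/\psi'(1)\right)$. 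At the jump time $s_i$, Lemma \ref{x sim y} applied to $g=u(\cdot,s_i)$ at the vertex $x_i$ (so that $\log\frac{u(x_{i-1},s_i)}{u(x_i,s_i)}\le\sqrt{H_\psi\mu_{\max}/w_{\min}}\sqrt{\Gamma^\psi(u)(x_i,s_i)}$) controls the jump.

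For this reason we arrange the negative gradient term to sit at $x_i$ after the jump, i.e.\ on $[s_i,t_i]$. On the $i$-th interval we then have
\[
\log u(x_{i-1},t_{i-1})-\log u(x_i,t_i)\le \text{(time factor)}+b\sqrt{\gamma(s_i)}-a\int_{s_i}^{t_i}\gamma ,
\]
with $\gamma(t)=\Gamma^\psi(u)(x_i,t)$, $b=\sqrt{H_\psi\mu_{\max}/w_{\min}}$ and $a=(1-\alpha)/\psi'(1)$. On $[t_{i-1},s_i]$ we simply drop the nonpositive gradient term. We choose $s_i$ as the minimizer in Lemma \ref{t1-t2}, which bounds the bracket by
\[
\frac{b^2}{a(t_i-t_{i-1})}=\frac{\psi'(1)H_\psi\mu_{\max}\,d}{(1-\alpha)w_{\min}(T_2-T_1)}.
\]
Summing over the $d$ intervals gives $\frac{\psi'(1)H_\psi\mu_{\max}d^2}{(1-\alpha)w_{\min}(T_2-T_1)}$, and exponentiating yields the claim.

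The main obstacle is the regularity needed to apply Lemma \ref{t1-t2}, which requires $t\mapsto\Gamma^\psi(u)(x_i,t)$ to be continuous; we assume $u$ is $C^1$ in $t$, as for heat solutions, and use the continuity of $\psi$. A second point of care is the bookkeeping ensuring that the gradient term is evaluated at the vertex to which Lemma \ref{x sim y} refers, and that the positivity $\psi'(1)>0$ is used when dividing.
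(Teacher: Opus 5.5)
Your proposal is correct and follows essentially the same route as the paper: on each step you stay at $x_{i-1}$, jump at a time $s_i$ controlled by Lemma~\ref{x sim y} at the post-jump vertex $x_i$, keep the gradient term only on $[s_i,t_i]$, and choose $s_i$ via Lemma~\ref{t1-t2} on equal subintervals, so that the $d$ contributions of size $d/(T_2-T_1)$ add up to the $d^2$ term. The differences are cosmetic. The paper first proves the adjacent case on $[T_1,T_2]$ and then chains it along an arbitrary path, taking the infimum over paths, and it puts the factor $\psi'(1)$ into $b$ rather than into $a$. Your remark on the continuity of $t\mapsto\Gamma^\psi(u)(x_i,t)$ is a point the paper leaves implicit; it already follows from differentiability of $u$ in $t$ and local finiteness.
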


\begin{proof}
    We first consider the case where $x\sim y$. Let $s\in [T_1,T_2]$. Using the assumption on $u$, 
    \begin{equation*}
    \begin{aligned}
        &\psi^{\prime}\left(1\right)\log\frac{u\left(x,T_1\right)}{u\left(y,T_2\right)}\\
        =&\psi^{\prime}\left(1\right)\log\frac{u\left(x,T_1\right)}{u\left(x,s\right)}+\psi^{\prime}\left(1\right)\log\frac{u\left(x,s\right)}{u\left(y,s\right)}+\psi^{\prime}\left(1\right)\log\frac{u\left(y,s\right)}{u\left(y,T_2\right)}\\
        =&\int_{T_1}^{s}-\psi^{\prime}\left(1\right)\partial_t\log u\left(x,t\right)dt+\psi^{\prime}\left(1\right)\log\frac{u\left(x,s\right)}{u\left(y,s\right)}\\
        &-\psi^{\prime}\left(1\right)\int_{s}^{T_2}\partial_t\log u\left(y,t\right)dt\\
        \leq& \int_{T_1}^{s} \left(\frac{c_1}{t}+c_2-\left(1-\alpha\right)\Gamma^{\psi}\left(u\right)\left(x,t\right)\right)dt+\psi^{\prime}\left(1\right)\log\frac{u\left(x,s\right)}{u\left(y,s\right)}\\
        &+\int_{s}^{T_2}\left(\frac{c_1}{t}+c_2-\left(1-\alpha\right)\Gamma^{\psi}\left(u\right)\left(y,t\right)\right)dt\\
        \leq& \int_{T_1}^{T_2} \left(\frac{c_1}{t}+c_2\right)dt+\psi^{\prime}\left(1\right)\log\frac{u\left(x,s\right)}{u\left(y,s\right)}-\int_{s}^{T_2}\left(1-\alpha\right)\Gamma^{\psi}\left(u\right)\left(y,t\right)dt\\
        \leq& c_1\log\frac{T_2}{T_1}+c_2\left(T_2-T_1\right)\\
        &+\psi^{\prime}\left(1\right)\sqrt{H_\psi \frac{\mu_{\max}}{w_{\min}}}\sqrt{\Gamma^{\psi}\left(u\right)\left(y,s\right)}-\int_{s}^{T_2}\left(1-\alpha\right)\Gamma^{\psi}\left(u\right)\left(y,t\right)dt,
    \end{aligned}
    \end{equation*}
    where the second-to-last inequality uses  $\Gamma^{\psi}\ge0$, and the last inequality follows from Lemma \ref{x sim y}  applied to $g=u(\cdotp,s)$. Taking the minimum over $s\in [T_1,T_2]$ and applying Lemma \ref{t1-t2} with
    \begin{equation*}
        a=1-\alpha,\ \ \ b=\psi^{\prime}\left(1\right)\sqrt{H_\psi \frac{\mu_{\max}}{w_{\min}}},\ \ \ \gamma(s)=\Gamma^{\psi}\left(u\right)\left(y,s\right), 
    \end{equation*}
    we obtain  
    \begin{equation*}
        \psi^{\prime}\left(1\right)\log\frac{u\left(x,T_1\right)}{u\left(y,T_2\right)}
        \leq c_1\log\frac{T_2}{T_1}+c_2\left(T_2-T_1\right)+\frac{\left(\psi^{\prime}\left(1\right)\right)^2 H_\psi \mu_{\max}
        }{\left(1-\alpha\right)w_{\min}\left(T_2-T_1\right)}.
    \end{equation*}
    Now, let $x,y\in V$ be arbitrary. Choose a path $x=x_0\sim x_{1}\sim \cdots \sim x_k=y$ connecting $x$ and $y$. Partition the interval $[T_1,T_2]$ as $T_1=t_0<\cdots<t_k=T_2$ with $t_i-t_{i-1}=\frac{T_2-T_1}{k}$. Applying the estimate above to each adjacent pair gives
    \begin{equation*}
    \begin{aligned}
        \psi^{\prime}\left(1\right)\log\frac{u\left(x,T_1\right)}{u\left(y,T_2\right)}&=\sum_{i=1}^{k} \psi^{\prime}\left(1\right)\log\frac{u\left(x_{i-1},t_{i-1}\right)}{u\left(x_{i},t_{i}\right)}\\
        &\leq\sum_{i=1}^{k} \left[ c_1\log\frac{t_i}{t_{i-1}}+c_2\left(t_{i}-t_{i-1}\right)+\frac{\left[\psi^{\prime}\left(1\right)\right]^2 H_\psi \mu_{\max}
        }{\left(1-\alpha\right)w_{\min}\left(t_{i}-t_{i-1}\right)}\right]\\
        &=c_1\log\frac{T_2}{T_1}+c_2\left(T_2-T_1\right)+\frac{\left[\psi^{\prime}\left(1\right)\right]^2 H_{\psi}\mu_{\max}k^2}{\left(1-\alpha\right)w_{\min}\left(T_2-T_1\right)},
    \end{aligned}
    \end{equation*}
    Taking the infimum over all such paths yields the desired inequality.
\end{proof}

\begin{corollary}
    Let $G=\left(V,E\right)$ be a locally finite weighted graph satisfying the $CD\psi\left(n,-K\right)$ condition with $K\geq0$, where $\psi\in C^{1}\left(0,+\infty\right)$ is a concave function with $\psi^{\prime}(1)>0$ and $H_{\psi}<\infty$. Then for any $0<t<s$ and any $x,y,z\in V$, 
\begin{equation*}
    p\left(t,x,y\right)\leq p\left(s,x,z\right)\left(\frac{s}{t}\right)^{\frac{n}{2\psi^\prime \left(1\right)}}\exp\left(\frac{Kn(s-t)}{4\psi^{\prime}(1)}+\frac{\psi^\prime \left(1\right)H_{\psi}\mu_{\max}\left(3+2Ks\right)d^2\left(y,z\right)}{3w_{\min}\left(s-t\right)} \right).
\end{equation*}
\label{harnack heat kernel -K}
\end{corollary}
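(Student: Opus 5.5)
The plan is to apply the Bakry--Qian case $b=1$ of Theorem \ref{class li-yau b} to the heat kernel, viewed as a function of its second variable. I will rewrite the resulting estimate in the form required by Lemma \ref{psi harnack}, with a constant $\alpha$ depending on $s$, and then read off the constants.

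\textbf{Step 1: reduction to bounded data.} Fix $x\in V$ and set $u(y,\tau):=p(\tau,x,y)$ for $\tau\in[t,s]$. By Proposition \ref{heat} (5), (6), (9), $u$ is positive and solves the heat equation in $y$. For any $0<\epsilon<t$ we have $u(\cdot,\tau)=P_{\tau-\epsilon}g$ with $g:=p(\epsilon,x,\cdot)$, by the semigroup property (7) and symmetry. The function $g$ is positive and bounded, because $\mu(x)p(\epsilon,y,x)=P_\epsilon\mathbf 1_{\{x\}}(y)\le 1$. Hence Theorem \ref{class li-yau b} with $b=1$, applied to $g$ at time $\tau-\epsilon$, is legitimate. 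Letting $\epsilon\to0$ (or simply keeping $\epsilon$ small and using continuity in $\tau$ of both sides), we get for $\tau\in[t,s]$:
\begin{equation*}
\Gamma^\psi(u)\le\psi'(1)\Bigl(1+\frac{2K\tau}{3}\Bigr)\frac{\partial_\tau u}{u}+\frac n{2\tau}+\frac{nK}{2}\Bigl(1+\frac{K\tau}{3}\Bigr).
\end{equation*}
This uses $\Delta u=\partial_\tau u$.

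\textbf{Step 2: algebraic rearrangement.} Divide by $1+2K\tau/3$. On the left, since $\Gamma^\psi\ge0$ and $\tau\le s$,
\begin{equation*}
\frac{\Gamma^\psi(u)}{1+2K\tau/3}\ge\frac{3}{3+2Ks}\,\Gamma^\psi(u),
\end{equation*}
so we may take $1-\alpha=3/(3+2Ks)\in(0,1]$. On the right, write $a=2K\tau/3$ and use $\frac{1}{\tau(1+a)}=\frac1\tau-\frac{2K}{3(1+a)}$. The right-hand side then becomes
\begin{equation*}
\frac n{2\tau}+\frac{nK}{6}\,\frac{1+K\tau}{1+2K\tau/3}\le\frac n{2\tau}+\frac{nK}{4},
\end{equation*}
because $(1+K\tau)/(1+2K\tau/3)\le 3/2$. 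So the hypothesis of Lemma \ref{psi harnack} holds on $[T_1,T_2]=[t,s]$ with $c_1=n/2$, $c_2=nK/4$ and $1-\alpha=3/(3+2Ks)$.

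\textbf{Step 3: apply the Harnack lemma.} Lemma \ref{psi harnack} with the points $y,z$ gives $u(y,t)\le u(z,s)(s/t)^{c_1/\psi'(1)}\exp(\cdots)$. The exponent is exactly
\begin{equation*}
\frac{Kn(s-t)}{4\psi'(1)}+\frac{\psi'(1)H_\psi\mu_{\max}(3+2Ks)\,d^2(y,z)}{3w_{\min}(s-t)},
\end{equation*}
which is the asserted inequality.

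The main care points are the justification in Step 1 and the bookkeeping in Step 2. In Step 1, the heat kernel must be realized as $P_\tau$ of a bounded positive function. In Step 2, the time-dependent coefficient $(1+2K\tau/3)$ in front of $\partial_\tau u/u$ has to be normalized away. Lemma \ref{psi harnack} requires a constant coefficient $\psi'(1)$ there, and dividing through is what forces the $s$-dependent factor $(3+2Ks)/3$ in front of the distance term.
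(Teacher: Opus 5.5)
Your proposal is correct and follows the paper's proof. It applies Theorem~\ref{class li-yau b} with $b=1$ and divides by $\theta(\tau)=1+\frac{2K\tau}{3}$. It then uses $\frac{1}{\theta(\tau)}\ge\frac{1}{\theta(s)}$ on the $\Gamma^\psi$ term and the bound $\frac{n}{2\theta(\tau)}\bigl(\frac{1}{\tau}+K+\frac{K^2\tau}{3}\bigr)\le\frac{n}{2\tau}+\frac{nK}{4}$, and finishes with Lemma~\ref{psi harnack} taking $1-\alpha=\frac{3}{3+2Ks}$, $c_1=\frac{n}{2}$, $c_2=\frac{nK}{4}$.

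Your Step~1 is a valid extra justification that the paper leaves implicit. The paper applies the gradient estimate directly to $p(\tau,x,\cdot)$, whereas you write $p(\tau,x,\cdot)=P_{\tau-\epsilon}\,p(\epsilon,x,\cdot)$, which has bounded positive initial data, and then let $\epsilon\to0$.
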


\begin{proof}
     By Theorem 4.2 with $b=1$, we recover the Bakry-Qian type estimate \eqref{Bakry-Qian graph} from which the result follows. Set $\theta(t)=1+\frac{2Kt}{3}$. Dividing both sides of \eqref{Bakry-Qian graph} by $\theta(t)$, we obtain 
    \begin{equation*}
         \frac{1}{\theta(t)}\Gamma^\psi(u)-\psi'(1)\frac{\partial_tu}{u}\leq\frac{n}{2\theta(t)}\left(\frac{1}{t}+K+\frac{K^{2}t}{3}\right). 
    \end{equation*}     
    A direct calculation gives
    \begin{equation*}
        \frac{1}{\theta(t)}\left(\frac{1}{t}+K+\frac{K^{2}t}{3}\right)\le\frac{1}{t}+\frac{K}{2}. 
    \end{equation*}
    Since $\theta(t)$ is increasing and $\Gamma^{\psi}\ge0$, for $t\in[T_1,T_2]$ we have
    \begin{equation*}
        \frac{1}{\theta(T_2)}\Gamma^\psi(u)-\psi'(1)\frac{\partial_tu}{u}\leq\frac{n}{2t}+\frac{nK}{4}.
    \end{equation*}   
    Fix $x\in V$. Applying Lemma \ref{psi harnack} to the function  $u(\tau,\cdot)=p(\tau,x,\cdot)$ with $\  \alpha=1-\frac{1}{\theta\left(T_2\right)}$ and $c_1=\frac{n}{2}$ and $ c_2=\frac{nK}{4}$, immediately yields the desired conclusion.
\end{proof}

\section{Volume doubling}

\begin{lemma}
    Let $G=\left(V,E\right)$ be a locally finite weighted graph satisfying the $CD\psi\left(n,-K\right)$ condition with $K\geq0$, where $\psi\in C^{1}\left(0,+\infty\right)$ is a concave function. Then for any $0<f\in \ell^{\infty}\left(V\right)$ with $R_{f}<\infty$ and $\tau>0$, 
    \begin{equation*}
    \begin{aligned}
        \tau P_t\left(f\cdotp  \Gamma^{\psi}\left(f\right)\right)-\left(\tau+t\right) P_t f\cdotp\Gamma^{\psi}\left(P_t f\right)
        \geq -\psi^{\prime}\left(1\right)t\left[1+K(2\tau+t)\right]\Delta P_t f\\
        -\frac{n}{8}\left[\log\left(1+\frac{t}{\tau}\right)+4Kt+4K^{2}\tau t+2K^{2}t^{2}\right] P_t f. 
    \end{aligned}       
    \end{equation*}
\label{log ineq -K}
\end{lemma}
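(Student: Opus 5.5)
The plan is to apply the variational inequality of Theorem \ref{variational th} with the terminal time equal to $t$ and a linear weight $\alpha$. Then integrate in time, choosing $\gamma$ so that the $\phi$-term on the right-hand side vanishes. This is the same mechanism as in the proof of Theorem \ref{zu}, except that $\alpha$ no longer vanishes at the terminal time. Throughout, $f$ satisfies $R_f<\infty$, so Theorem \ref{variational th} and Lemmas \ref{uniform}, \ref{partial} apply directly and no approximation step is needed.

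\textbf{Setup.} Fix $t>0$ and $\tau>0$, and set $T:=t$ in \eqref{v def}. Use $s\in[0,t]$ as the time variable, so $v(s)=P_{t-s}f$ and $\phi(s)=P_s\bigl(v\,\Gamma^\psi(v)\bigr)$. The endpoint values are
\begin{equation*}
\phi(0)=P_tf\cdot\Gamma^\psi(P_tf),\qquad \phi(t)=P_t\bigl(f\,\Gamma^\psi(f)\bigr).
\end{equation*}
To get the coefficients $\tau+t$ and $\tau$ in front of these, take $\alpha(s):=\tau+t-s$. This is positive and $C^1$ on $[0,t]$, and satisfies $\alpha(0)=\tau+t$, $\alpha(t)=\tau$.

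\textbf{Choice of $\gamma$.} Choose
\begin{equation*}
\gamma:=\frac n4\Bigl(\frac{\alpha'}{\alpha}-2K\Bigr)=-\frac n4\Bigl(\frac1\alpha+2K\Bigr),
\end{equation*}
which is continuous on $[0,t]$ and kills the $\phi$-coefficient in \eqref{variational ineq}. The remaining two coefficients simplify to
\begin{equation*}
\frac{4\alpha\gamma}{n}=-(1+2K\alpha),\qquad \frac{2\alpha\gamma^2}{n}=\frac n8\Bigl(\frac1\alpha+4K+4K^2\alpha\Bigr).
\end{equation*}
Hence \eqref{variational ineq} becomes
\begin{equation*}
\partial_s(\alpha\phi)\ge -\psi'(1)(1+2K\alpha)\,\Delta P_tf-\frac n8\Bigl(\frac1\alpha+4K+4K^2\alpha\Bigr)P_tf .
\end{equation*}

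\textbf{Integration.} Integrate over $s\in[0,t]$. This is legitimate because $\phi$ is differentiable in $s$ by Lemma \ref{partial}, with a derivative uniformly bounded by Lemma \ref{uniform}. The left side becomes $\tau\phi(t)-(\tau+t)\phi(0)$, which is exactly the left side of the claim. For the right side, the elementary integrals are
\begin{equation*}
\int_0^t\alpha\,ds=\tau t+\tfrac{t^2}{2},\qquad \int_0^t\frac{ds}{\alpha}=\log\Bigl(1+\frac t\tau\Bigr).
\end{equation*}
They give $\int_0^t(1+2K\alpha)\,ds=t[1+K(2\tau+t)]$. They also give
\begin{equation*}
\int_0^t\Bigl(\frac1\alpha+4K+4K^2\alpha\Bigr)ds=\log\Bigl(1+\frac t\tau\Bigr)+4Kt+4K^2\tau t+2K^2t^2.
\end{equation*}
Substituting these yields the stated inequality.

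There is no real obstacle. The only points to check are that this $\gamma$ is admissible (it is continuous because $\alpha\ge\tau>0$), and the bookkeeping of the time reversal: $\phi$ at $s=0$ versus $s=t$ must carry the weights $\alpha(0)=\tau+t$ and $\alpha(t)=\tau$ respectively.
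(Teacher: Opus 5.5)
Your proposal is correct and is essentially the paper's proof. The paper makes the same choices $\alpha=\tau+T-t$ and $\gamma=-\frac{n}{4\alpha}-\frac{nK}{2}$ in \eqref{variational ineq}, so the $\phi$-coefficient vanishes. It then integrates over $[0,T]$ with the same two elementary integrals and reads off the endpoint weights $\alpha(T)=\tau$ and $\alpha(0)=\tau+T$.
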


\begin{proof}
    We obtain the result by making a suitable choice of the parameters $\alpha$ and $\gamma$ in the variational inequality of Theorem \ref{variational th}. Set
    \begin{equation*}
        \alpha:=\tau+T-t,\ \ \ \ \ \ \ \ \ \gamma:=-\frac{n}{4\left(\tau+T-t\right)}-\frac{nK}{2}.
    \end{equation*}
    Substituting these expressions into \eqref{variational ineq} yields
    \begin{equation*}
         \partial_t\left(\alpha \phi\right)\geq-\psi^{\prime}\left(1\right)\left(1+2K\alpha\right)\Delta P_T f-\frac{n}{8}\left(\frac{1}{\alpha}+4K+4K^{2}\alpha\right) P_T f .   
    \end{equation*}
    Integrating both sides with respect to $t$ over $[0,T]$ and using
    \begin{equation*}
        \int_{0}^{T}\alpha dt=\tau T+\frac{T^2}{2}, \ \ \ \ \ \ \ \ \ \int_{0}^{T}\frac{dt}{\alpha}=\log\left(1+\frac{T}{\tau}\right),
    \end{equation*}
    we obtain    
    \begin{equation*}
    \begin{aligned}
        \alpha\left(T\right)\phi\left(T\right)-\alpha\left(0\right)\phi\left(0\right)\geq-\psi^{\prime}\left(1\right)T\left[1+K(2\tau+T)\right]\Delta P_T f\\
        -\frac{n}{8}\left[\log\left(1+\frac{T}{\tau}\right)+4KT+4K^{2}\tau T+2K^{2}T^{2}\right] P_T f. 
    \end{aligned}       
    \end{equation*}
    A direct computation gives
    \begin{equation*}
       \alpha\left(T\right)=\tau,\  \alpha\left(0\right)=\tau+T,\ \phi\left(T\right)=P_T\left(f\cdotp\Gamma^{\psi}\left(f\right)\right),\ \phi\left(0\right)=P_T f\cdotp\Gamma^{\psi}\left(P_T f\right). 
    \end{equation*}
    Therefore, we have
    \begin{equation*}
    \begin{aligned}
        \tau P_T\left(f\cdotp  \Gamma^{\psi}\left(f\right)\right)-\left(\tau+T\right)P_T f\cdotp\Gamma^{\psi}\left(P_T f\right)
        \geq -\psi^{\prime}\left(1\right)T\left[1+K(2\tau+T)\right]\Delta P_T f\\
        -\frac{n}{8}\left[\log\left(1+\frac{T}{\tau}\right)+4KT+4K^{2}\tau T+2K^{2}T^{2}\right] P_T f. 
    \end{aligned}       
    \end{equation*}
    Since $T$ is arbitrary, we may replace $T$ by $t$ to obtain the desired inequality.
\end{proof}

    For any $g\in \mathbb{R}^{V}$ with $g\le 0$ and any $\lambda>0$, we define $\varphi_{\lambda}(t,x)$ by
    \begin{equation}
        \varphi_{\lambda} \left(t,x\right):=\frac{1}{\lambda}\log P_t\left(e^{\lambda g}\right)(x),
    \label{varphi def}
    \end{equation}
\begin{lemma}
    Let $G=\left(V,E\right)$ be a locally finite weighted graph satisfying the $CD\psi\left(n,-K\right)$ condition with $K\geq0$ where $\psi\in C^{1}\left(0,+\infty\right)$ is a concave function with $\psi^\prime \left(1\right)>0$. Assume that $g\le0$ is a $d$-Lipschitz function. Then for any $\tau,\lambda>0$, there exists a constant $C\left(\lambda,d,\psi,D_{\mu}\right)>0$ depending only on $\lambda,d,\psi,D_{\mu}$ such that 
    \begin{equation*}
        \partial_t \varphi_{\lambda}\geq - \frac{1}{\psi^{\prime}\left(1\right)t}\left[\lambda C\left(\lambda,d,\psi,D_{\mu}\right)\tau+\frac{n}{8\lambda}\log\left(1+\frac{t}{\tau}\right)\right]
          -\frac{nK}{2\psi^{\prime}(1)\lambda},
    \end{equation*}
    where $\varphi_{\lambda}$ is defined by \eqref{varphi def}.
\label{partial varphi -K}    
\end{lemma}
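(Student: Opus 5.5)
Apply Lemma \ref{log ineq -K} to $f:=e^{\lambda g}$ and turn it into a lower bound for $\partial_t\varphi_\lambda$. First check that Lemma \ref{log ineq -K} applies. Since $g\le 0$, we have $0<f\le 1$, so $f\in\ell^\infty(V)$. Since $g$ is $d$-Lipschitz, $R_f\le e^{\lambda d}<\infty$.

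Next, rewrite the lemma's terms. On the left there are two of them. The term $\tau P_t(f\Gamma^\psi(f))$ will be bounded above, and the term $-(\tau+t)P_tf\,\Gamma^\psi(P_tf)$ is $\le 0$ and can simply be dropped. On the right, note that
\begin{equation*}
\partial_t\varphi_\lambda=\frac{1}{\lambda}\frac{\Delta P_tf}{P_tf},
\end{equation*}
so $-\psi'(1)t[1+K(2\tau+t)]\Delta P_tf=-\psi'(1)t\lambda[1+K(2\tau+t)]\,P_tf\,\partial_t\varphi_\lambda$. Moving this term to the left, dividing by $\psi'(1)\lambda t[1+K(2\tau+t)]\,P_tf>0$ (here $\psi'(1)>0$ is used), and using $\Gamma^\psi\ge0$ gives
\begin{equation*}
\partial_t\varphi_\lambda\ \ge\ -\frac{\tau P_t(f\Gamma^\psi f)+\frac n8\bigl[\log(1+\tfrac t\tau)+4Kt+4K^2\tau t+2K^2t^2\bigr]P_tf}{\psi'(1)\lambda t\,[1+K(2\tau+t)]\,P_tf}.
\end{equation*}

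The key pointwise bound concerns $f\Gamma^\psi(f)$. At $x$,
\begin{equation*}
\Gamma^\psi(f)(x)=\frac1{\mu(x)}\sum_{y\sim x}w_{xy}\,\overline\psi\bigl(e^{\lambda(g(y)-g(x))}\bigr),
\end{equation*}
and the exponent $\lambda(g(y)-g(x))$ lies in $[-\lambda d,\lambda d]$. Since $\overline\psi$ is continuous, this gives $\Gamma^\psi(f)\le D_\mu\max_{|s|\le\lambda d}\overline\psi(e^s)=D_\mu\,\eta_\psi(\lambda d)$, using that $\overline\psi$ is monotone on either side of $1$. Hence $f\Gamma^\psi(f)\le D_\mu\eta_\psi(\lambda d)\,f$ and $P_t(f\Gamma^\psi f)\le D_\mu\eta_\psi(\lambda d)P_tf$. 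Set $C(\lambda,d,\psi,D_\mu):=D_\mu\eta_\psi(\lambda d)/\lambda^2$, so that $\tau P_t(f\Gamma^\psi f)\le \lambda^2 C\tau P_tf$. After dividing by $\lambda$, this produces exactly the term $\lambda C\tau$ in the statement. The factor $P_tf$ then cancels.

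It remains to handle the factor $1+K(2\tau+t)\ge1$ so that the curvature terms come out as claimed. Write $A:=\lambda C\tau+\frac n{8\lambda}\log(1+t/\tau)$. The non-curvature part contributes at least $-A/(\psi'(1)t)$, after discarding the denominator factor since it is $\ge1$ and the numerator is nonnegative. The curvature part contributes
\begin{equation*}
-\frac{n}{8\lambda\psi'(1)}\cdot\frac{4K+4K^2\tau+2K^2t}{1+2K\tau+Kt}.
\end{equation*}
Since $4K+4K^2\tau+2K^2t\le 4K(1+K\tau+Kt/2)\le 4K(1+2K\tau+Kt)$, this ratio is at most $4K$. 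So the curvature part is at least $-\frac{nK}{2\psi'(1)\lambda}$, which is the claimed term.

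The main step to be careful with is this last algebra: one must keep the denominator $1+K(2\tau+t)$ for the curvature terms but discard it for the others. A secondary point is that $C$ depends only on $\lambda,d,\psi,D_\mu$, which holds because $\eta_\psi(\lambda d)$ is finite by the continuity of $\overline\psi$.
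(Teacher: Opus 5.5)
Your proposal is correct and follows the paper's proof essentially step by step. You apply Lemma \ref{log ineq -K} to $f=e^{\lambda g}$, drop the nonnegative term $(\tau+t)P_tf\,\Gamma^\psi(P_tf)$, use $\Delta P_tf=\lambda P_tf\,\partial_t\varphi_\lambda$, and bound $\Gamma^\psi(e^{\lambda g})\le D_\mu\eta_\psi(\lambda d)$. You then use $1+K(2\tau+t)\ge1$ on the non-curvature terms and bound the curvature ratio by $4K$, which is the same as the paper's bound $\frac{2+K(2\tau+t)}{1+K(2\tau+t)}\le 2$. One small point: your $C=D_\mu\eta_\psi(\lambda d)/\lambda^2$ could be zero in degenerate cases, but enlarging $C$ only weakens the inequality, so any positive upper bound gives the required $C>0$.
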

\begin{proof}  
    From the definition of $\varphi_{\lambda}$, we have $P_t\left(e^{\lambda g}\right)=e^{\lambda\varphi_{\lambda} }$. Since $g$ is Lipschitz, the positive function $f=e^{\lambda g}$ satisfies $R_{f}<\infty$. Applying Lemma \ref{log ineq -K} to this $f$ yields
    \begin{equation*}
    \begin{aligned}
        \tau P_t\left[e^{\lambda g}\cdotp\Gamma^{\psi}\left(e^{\lambda g}\right)\right]-\left(\tau+t\right)e^{\lambda\varphi_{\lambda} }\cdotp\Gamma^{\psi}\left(e^{\lambda\varphi_{\lambda} }\right)
        \geq -\psi^{\prime}\left(1\right)t\left[1+K(2\tau+t)\right]\Delta P_t e^{\lambda g}\\
        -\frac{n}{8}\left[\log\left(1+\frac{t}{\tau}\right)+4Kt+4K^{2}\tau t+2K^{2}t^{2}\right] e^{\lambda\varphi_{\lambda} }.
    \end{aligned}       
    \end{equation*}
    By property (4) of Proposition \ref{heat} and the definition of $\varphi_{\lambda}$, we compute
    \begin{equation*}
        \Delta P_t e^{\lambda g}=\partial_t P_t e^{\lambda g}=\partial_t e^{\lambda\varphi_{\lambda} }=\lambda e^{\lambda\varphi_{\lambda} }\partial_t\varphi_{\lambda} .
    \end{equation*}
    Since $\Gamma^{\psi}\geq0$, the term $\left(\tau+t\right)e^{\lambda\varphi_{\lambda} }\cdotp\Gamma^{\psi}\left(e^{\lambda\varphi_{\lambda} }\right)$ is nonnegative. Discarding it gives the weaker inequality
    \begin{equation*}
    \begin{aligned}
         \tau P_t\left[e^{\lambda g}\cdotp\Gamma^{\psi}\left(e^{\lambda g}\right)\right]
        \geq -\psi^{\prime}\left(1\right)t\left[1+K(2\tau+t)\right]\lambda e^{\lambda\varphi_{\lambda} }\partial_t\varphi_{\lambda}\\        
        -\frac{n}{8}\left[\log\left(1+\frac{t}{\tau}\right)+4Kt+4K^{2}\tau t+2K^{2}t^{2}\right] e^{\lambda\varphi_{\lambda} }.   
    \end{aligned}
    \end{equation*}
    We now estimate the left-hand side. Because $g$ is $d$-Lipschitz, for any $x\sim y$ we have 
    $\left|g(y)-g(x)\right|\le d$. Hence, 
    \begin{equation*} 
        e^{\lambda\left(g\left(y\right)-g\left(x\right)\right)}\in\left[e^{-\lambda d},e^{\lambda d}\right]. 
    \end{equation*}
    Since the function $\overline{\psi}$ is nonnegative, attains its minimum at 1 and increases away from 1, its maximum on this interval is attained at the endpoints. Using the definition of $\Gamma^{\psi}$, 
    \begin{equation*}
    \begin{aligned}
        \Gamma^{\psi}\left(e^{\lambda g}\right)\left(x\right)=\frac{1}{\mu(x)}\sum_{y\sim x}w_{xy}\overline{\psi}\left(e^{\lambda\left[g\left(y\right)-g\left(x\right)\right]}\right)
        \leq D_{\mu}\max\left\{\overline{\psi}\left(e^{\lambda d}\right),\overline{\psi}\left(e^{-\lambda d}\right)\right\}. 
    \end{aligned}  
    \end{equation*}
    Therefore, we can choose a constant $C\left(\lambda,d,\psi,D_{\mu}\right)>0$ depending only on $\lambda,d,\psi$ and $D_{\mu}$ such that 
    \begin{equation*}
        \Gamma^{\psi}\left(e^{\lambda g}\right)\leq C\left(\lambda,d,\psi,D_{\mu}\right)\lambda^{2}.
    \end{equation*}
    Substituting this bound into the previous inequality and rearranging, we obtain
   \begin{equation*}
    \begin{aligned}
          \partial_t \varphi_{\lambda} \geq - \frac{1}{\psi^{\prime}\left(1\right)t\left[1+K(2\tau+t)\right]}\left[\lambda C\left(\lambda,d,\psi,D_{\mu}\right)\tau+\frac{n}{8\lambda}\log\left(1+\frac{t}{\tau}\right)\right]\\
          -\frac{nK\left[2+K(2\tau+t)\right]}{4\psi^{\prime}(1)\lambda\left[1+K(2\tau+t)\right]}. 
    \end{aligned}
    \end{equation*}
    To simplify the coefficients, note that $K\ge0$ implies $1+K(2\tau+t)\ge1$ and
    \begin{equation*}
        \frac{2+K(2\tau+t)}{1+K(2\tau+t)}\le2. 
    \end{equation*}
    Applying these elementary bounds, we obtain the cleaner estimate, which completes the proof.
\end{proof}
    
\begin{theorem}
     Let $G=\left(V,E\right)$ be a locally finite weighted graph satisfying the $CD\psi\left(n,-K\right)$ condition with $K\ge 0$, where $\psi\in C^{1}\left(0,+\infty\right)$ is a concave function with $\psi^\prime \left(1\right)>0, H_{\psi}<\infty $ and $\limsup _{\lambda \downarrow 0}\frac{\eta_{\psi}(\lambda)}{\lambda^{2}}<\infty $. Then there exist constants $\rho,A>0$ depending only on $n,\psi$ and $D_{\mu}$ such that for any $x\in V$ and any $r\geq 1/2$, 
    \begin{equation*}
         P_{t_r}\left({\mathbf{1}}_{B\left(x,r\right)}\right)\left(x\right)\geq \rho. 
     \end{equation*}
\label{log heat ball -K}
    where $t_r=A\min\left\{r^2,\frac{1}{K}\right\}$, with the convention that $\frac{1}{K}=+\infty$ when $K=0$. 
\end{theorem}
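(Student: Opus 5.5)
The plan is to integrate the lower bound on $\partial_t\varphi_\lambda$ from Lemma \ref{partial varphi -K} in time, starting from $t=0$, with the test function $g:=-d(x,\cdot)$. This $g$ satisfies $g\le 0$ and $g(x)=0$, and it is $1$-Lipschitz. Then I compare $e^{\lambda g}$ with $\mathbf 1_{B(x,r)}$. Outside $B(x,r)$ we have $d(x,\cdot)\ge r$, so $e^{\lambda g}\le \mathbf 1_{B(x,r)}+e^{-\lambda r}$. Positivity and $P_t\mathbf 1=\mathbf 1$ (Proposition \ref{heat}) then give
\begin{equation*}
P_T\mathbf 1_{B(x,r)}(x)\ \ge\ e^{\lambda\varphi_\lambda(T,x)}-e^{-\lambda r}.
\end{equation*}
It therefore suffices to show $\lambda\varphi_\lambda(T,x)\ge -M$ with $M$ independent of $r$, $x$ and $K$, for the choices $\lambda=\Lambda/r$ and $T=t_r$, and then to take $\Lambda$ large.

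\textbf{Step 1 (the constant in Lemma \ref{partial varphi -K}).} Its proof shows that one may take $C(\lambda,1,\psi,D_\mu)=D_\mu\,\eta_\psi(\lambda)/\lambda^2$. The function $\eta_\psi$ is continuous, and by hypothesis $\limsup_{\lambda\downarrow0}\eta_\psi(\lambda)/\lambda^2<\infty$. Hence for any fixed $\Lambda$ there is a $C_0=C_0(\Lambda,\psi,D_\mu)$ with $C(\lambda)\le C_0$ for all $\lambda\in(0,2\Lambda]$. Since $r\ge 1/2$, the value $\lambda=\Lambda/r$ always lies in this range; this is the only place where $r\ge1/2$ is used.

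\textbf{Step 2 (choosing $\tau$ depending on $t$; this is the main obstacle).} The bound in Lemma \ref{partial varphi -K} carries a factor $1/t$. With $\tau$ constant or $\tau\propto t$, one of the two bracketed terms is not integrable at $t=0$, or it produces a $\log T$ that grows with $r$. The lemma holds for every $\tau>0$ at each fixed $t$, so I choose $\tau=\tau(t)=\sqrt{tT}$. Then $\lambda C\tau/t=\lambda C\sqrt{T/t}$, and $\log(1+t/\tau)/t\le (tT)^{-1/2}$. Both are integrable on $(0,T]$, with integrals $2\lambda CT$ and $2$ respectively. Integrating over $[0,T]$ and using $\varphi_\lambda(0,x)=g(x)=0$ gives
\begin{equation*}
\lambda\varphi_\lambda(T,x)\ \ge\ -\frac{1}{\psi'(1)}\Big(2C_0\lambda^2\,T+\frac n4\Big)-\frac{nKT}{2\psi'(1)} .
\end{equation*}
Here I multiply through by $\lambda$ and use $C\le C_0$. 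I still need to justify integrating down to $t=0$. For this I use that $\varphi_\lambda$ is continuous on $[0,T]$ and $C^1$ on $(0,T]$, and that the lower bound is integrable there.

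\textbf{Step 3 (choice of constants).} Put $\lambda=\Lambda/r$ and $T=t_r=A\min\{r^2,1/K\}$. Then $\lambda^2T\le\Lambda^2A$ and $KT\le A$, so
\begin{equation*}
\lambda\varphi_\lambda(t_r,x)\ge -M,\qquad M:=\frac{n}{4\psi'(1)}+\frac{A}{\psi'(1)}\Big(2C_0(\Lambda)\Lambda^2+\frac n2\Big).
\end{equation*}
First fix $\Lambda$, depending only on $n$ and $\psi$, such that $e^{-\Lambda}\le \tfrac12 e^{-n/(4\psi'(1))-1}$. Then choose $A$, depending on $n,\psi,D_\mu$ through $C_0(\Lambda)$, so small that the second term of $M$ is at most $1$. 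This gives $M\le n/(4\psi'(1))+1$, and hence
\begin{equation*}
P_{t_r}\mathbf 1_{B(x,r)}(x)\ge e^{-M}-e^{-\Lambda}\ge \tfrac12 e^{-n/(4\psi'(1))-1}=:\rho .
\end{equation*}
The case $K=0$ is the same computation with $t_r=Ar^2$, and the $K$-term vanishes. The hypothesis $H_\psi<\infty$ is not needed for this step; it enters only later, through the Harnack inequality.
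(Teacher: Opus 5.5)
Your proof is correct. It shares the paper's skeleton: integrate the bound of Lemma \ref{partial varphi -K} from $t=0$ with $g=-d(x,\cdot)$, then use $e^{\lambda g}\le\mathbf 1_{B(x,r)}+e^{-\lambda r}$. Where you differ is in how you choose the free parameters.

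The paper takes the pointwise optimal $\tau(t)$, which leads to the auxiliary function $G$ and the term $\Phi$. It fixes $\lambda$ implicitly by $D_\mu\eta_\psi(\lambda)=r^{-2}$. This makes $\Phi(r^{-2},t_r)$ bounded by a quantity depending only on $A$ that tends to $0$ as $A\to0$. The cost is that it needs the intermediate value theorem and the bound $\eta_\psi(\lambda)\ge\lambda^2/H_\psi$, so $H_\psi<\infty$ is used at this stage. It also only gives $\lambda r\ge c_0$ for a possibly small $c_0$, which forces $\rho<1-e^{-c_0}$.

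You instead fix $\lambda r=\Lambda$ large. You bound $\eta_\psi(\lambda)/\lambda^2$ uniformly on $(0,2\Lambda]$ using the small-$\lambda$ hypothesis and continuity; here $r\ge 1/2$ confines $\lambda$ to a bounded range, the same role it plays in the paper. You also take the cruder $\tau=\sqrt{tT}$, which is legitimate because the lemma holds at each $t$ for every $\tau>0$. This leaves a loss of $n/(4\psi'(1))$ that does not vanish as $A\to0$, but you absorb it by choosing $\Lambda$ before $A$.

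Your route buys an elementary argument with no special function and no asymptotics of $\int G(u)u^{-2}\,du$. It needs no case split on $Kr^2$, since $\lambda^2 t_r\le\Lambda^2A$ and $Kt_r\le A$ directly, and no use of $H_\psi<\infty$ here. It also gives the explicit value $\rho=\tfrac12 e^{-n/(4\psi'(1))-1}$. The paper's optimized $\tau$ yields a sharper intermediate estimate, but that extra sharpness is not needed for the doubling theorem.
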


\begin{proof}
    We obtain the result by making a suitable choice of $\tau,g$ and $t$ in Lemma \ref{partial varphi -K}. Choose $\tau$ to maximize the right hand side of the inequality in Lemma \ref{partial varphi -K}. Set
    \begin{equation*}
        \tau=\frac{t}{2}\left(\sqrt{1+\frac{n}{2\lambda^{2}C\left(\lambda,d,\psi,D_{\mu}\right)t}}-1\right),
    \end{equation*}
    With this choice, the inequality becomes
    \begin{equation*}
        -\partial_t \varphi_{\lambda}\leq\frac{1}{\psi^{\prime}\left(1\right)}\lambda C\left(\lambda,d,\psi,D_{\mu}\right)G\left(\frac{1}{\lambda^2  C\left(\lambda,d,\psi,D_{\mu}\right)t} \right)+\frac{nK}{2\psi^{\prime}(1)\lambda}, 
    \end{equation*}
    where the auxiliary function $G$ is defined by
    \begin{equation*}
        G\left(s\right):=\frac{1}{2}\left(\sqrt{1+\frac{n}{2}s}-1\right)+\frac{n}{8}s\log\left(1+\frac{2}{\sqrt{1+\frac{n}{2}s}-1}\right).
    \end{equation*}
    Integrating both sides with respect to $t$ from $t_{1}$ to $t_{2}$ gives
   \begin{equation*}
    \begin{aligned}
        \varphi_{\lambda}\left(t_1\right)\leq\varphi_{\lambda}\left(t_2\right)+\frac{1}{\psi^{\prime}\left(1\right)}\lambda C\left(\lambda,d,\psi,D_{\mu}\right)\int_{t_1}^{t_2}G\left(\frac{1}{\lambda^2  C\left(\lambda,d,\psi,D_{\mu}\right)t} \right)dt\\+\frac{nK}{2\psi^{\prime}(1)\lambda}(t_2-t_1). 
    \end{aligned}
    \end{equation*}
    Note that $G\left(s\right)\sim\sqrt{\frac{ns}{8}}$ as $s\rightarrow+\infty$ and $G\left(s\right)\rightarrow0$ as $s\rightarrow0^{+}$. Hence, the integral converges as $t_{1}\rightarrow 0$. Letting $t_1=0$ and writing $t$ in place of $t_2$, we obtain
    \begin{equation*}
    \lambda g\leq\lambda\varphi_{\lambda}\left(t\right)+\frac{1}{\psi^{\prime}\left(1\right)}\lambda^2 C\left(\lambda,d,\psi,D_{\mu}\right)\int_{0}^{t}G\left(\frac{1}{\lambda^2  C\left(\lambda,d,\psi,D_{\mu}\right)s} \right)ds+\frac{nK}{2\psi^{\prime}(1)}t.
    \end{equation*}
    Now fix $x\in V$ and choose a function $g$ with $g\left(x\right)=0$. The preceding inequality at $x$ becomes
    \begin{equation}
        1=e^{\lambda g\left(x\right)}\leq e^{\lambda\varphi_{\lambda}\left(t,x\right)}\cdotp e^{\Phi\left(\lambda^2 C\left(\lambda,d,\psi,D_{\mu}\right),t\right)}\cdotp e^{\frac{nK}{2\psi^{\prime}(1)}t},
    \label{phi Phi -K}
    \end{equation}
    where the function $\Phi$ is defined by
    \begin{equation*}
        \Phi\left(a,t\right)=\frac{1}{\psi^{\prime}\left(1\right)}a\int_{0}^{t}G\left(\frac{1}{as} \right)ds. 
    \end{equation*}
    
    Now take $r\ge \frac{1}{2}$ and let $B=B\left(x,r\right)$. Define  $g\left(y\right)=-d\left(y,x\right)$. Then $g\le0$ is a $1$-Lipschitz function with $g\left(x\right)=0$ and 
    \begin{equation*}
        e^{\lambda g}\leq e^{-\lambda r}\textbf{1}_{B^{C}}+\textbf{1}_{B},
    \end{equation*}
    Using the monotonicity of the heat semigroup,
     \begin{equation*}
        e^{\lambda \varphi_{\lambda}}\leq e^{-\lambda r}+P_t\left(\textbf{1}_{B}\right).
    \end{equation*}
    Substituting this into \eqref{phi Phi -K} yields
    \begin{equation*}
        P_t\left(\textbf{1}_{B}\right)\left(x\right)\geq e^{-\frac{nK}{2\psi^{\prime}(1)}t} e^{-\Phi\left(\lambda^2 C\left(\lambda,1,\psi,D_{\mu}\right),t\right)}-e^{-\lambda r}.
    \end{equation*}
    From the definition of $\Gamma^{\psi}$ and the Lipschitz property we can take the constant 
    \begin{equation*}
        C\left(\lambda,1,\psi,D_{\mu}\right)=\frac{1}{\lambda^2}D_{\mu}\eta_{\psi}(\lambda). 
    \end{equation*} 
    Observe that by the definition of $H_{\psi}$, $\overline{\psi}\left(e^{\lambda}\right)\ge\frac{\lambda^{2}}{H_{\psi}}$, which implies $\eta_{\psi}(\lambda)\ge\frac{\lambda^{2}}{H_{\psi}}$.  
    Consequently, we obtain
    \begin{equation}
        \eta_{\psi}(\lambda)\rightarrow+\infty, \ \ \text{as}\  \lambda\rightarrow+\infty. 
    \label{lim eta}
    \end{equation}
    Since $\eta_{\psi}$ is continuous on $[0,+\infty)$ with $\eta_{\psi}(0)=0$,  by  the intermediate value theorem, we can choose $\lambda$ so that 
    \begin{equation*}
        \lambda^2C\left(\lambda,1,\psi,D_{\mu}\right)=\frac{1}{r^2}. 
    \end{equation*}
    Since $r\ge \frac{1}{2}$, we have $\eta_{\psi}(\lambda)=\frac{1}{D_{\mu}r^{2}}\le \frac{4}{D_{\mu}}$. Denote
    \begin{equation*}
        S:=\{\lambda>0:\eta_{\psi}(\lambda)\le \frac{4}{D_{\mu}}\}. 
    \end{equation*}
    By \eqref{lim eta}, the set $S$ is bounded. By assumption
    \begin{equation*}
        \limsup_{\lambda\downarrow0}
    \frac{\eta_\psi(\lambda)}{\lambda^2}<\infty,
    \end{equation*}
    there exist $\delta,M>0$ which depends only on $\psi$ such that
    \begin{equation*}
        \eta_\psi(\lambda)\leq M\lambda^2,
    \qquad 0<\lambda\leq\delta.
    \end{equation*}
    On the other hand, since $S$ is bounded and
    $\eta_\psi(\lambda)>0$ for $\lambda>0$, the continuity implies that $\frac{\lambda}{\sqrt{\eta_{\psi}(\lambda)}}$ has a positive lower bound on $S$ which depends only on $\psi$ and $D_{\mu}$. Therefore, we obtain
    \begin{equation*}
        \lambda r=\frac{1}{\sqrt{D_{\mu}}}\frac{\lambda}{\sqrt{\eta_{\psi}(\lambda)}}\ge c_{0}>0, 
    \end{equation*}
    where $c_{0}=c_{0}\left(\psi,D_{\mu}\right)$. 

    It remains to estimate the term involving $\Phi$. With the change of variables $u=\frac{1}{as}$ with $a=\frac{1}{r^2}$, we obtain 
    \begin{equation*}
    \Phi\Bigl(\frac{1}{r^{2}},t\Bigr) = \frac{1}{\psi'(1)}\int_{\frac{r^{2}}{t}}^{\infty} \frac{G(u)}{u^{2}}du.
    \end{equation*}
    We now distinguish two cases.
    \begin{itemize}
        \item If $Kr^{2}\geq 1$, set $t=\frac{A}{K}$. Then $\frac{r^{2}}{t}=\frac{Kr^{2}}{A}\ge\frac{1}{A}$, so
    \begin{equation*}
        \Phi\left(\frac{1}{r^2},\frac{A}{K}\right)\leq\frac{1}{\psi^{\prime}\left(1\right)}\int_{\frac{1}{A}}^{\infty}\frac{G\left(u\right)}{u^2}du, \ \ \ \ \frac{nK}{2\psi^{\prime}(1)}t=\frac{nA}{2\psi^{\prime}(1)}. 
    \end{equation*}
        \item If $Kr^{2}<1$, set $t=Ar^{2}$. Then $\frac{r^{2}}{t}=\frac{1}{A}$, so
    \begin{equation*}
        \Phi\left(\frac{1}{r^2},Ar^2\right)=\frac{1}{\psi^{\prime}\left(1\right)}\int_{\frac{1}{A}}^{\infty}\frac{G\left(u\right)}{u^2}du, \ \ \ \ \frac{nK}{2\psi^{\prime}(1)}t=\frac{nA}{2\psi^{\prime}(1)}Kr^2\le \frac{nA}{2\psi^{\prime}(1)}. 
    \end{equation*}
    \end{itemize}
    In both cases, we have
    \begin{equation*}
    \Phi\Bigl(\frac{1}{r^{2}},t_{r}\Bigr) \le \frac{1}{\psi'(1)}\int_{\frac{1}{A}}^{\infty}\frac{G(u)}{u^{2}}du,\ \ \ \ \frac{nK}{2\psi^{\prime}(1)}t_{r}\le \frac{nA}{2\psi^{\prime}(1)},
    \end{equation*}
    where $t_r=A\min\left\{r^2,\frac{1}{K}\right\}$. Since $G\left(u\right)\sim\sqrt{\frac{nu}{8}}$ as $u\rightarrow+\infty$, 
    \begin{equation*}
        \int_{\frac{1}{A}}^{\infty}\frac{G(u)}{u^{2}}du\rightarrow0,\ \ \text{as} \ A\rightarrow0. 
    \end{equation*}
    Fix $\rho\in\left(0,1-e^{-c_{0}}\right)$. Therefore, we may choose $A$ sufficiently small, depending only on $n,\psi, D_{\mu}$, such that
    \begin{equation*}
        e^{-\frac{nK}{2\psi^{\prime}(1)}t_r} e^{-\Phi\left(\frac{1}{r^{2}},t_{r}\right)} \ge e^{-c_{0}} + \rho. 
    \end{equation*}
    With this choice, we complete the proof.
    
\end{proof}

\begin{theorem}
    Let $G=\left(V,E\right)$ be a locally finite weighted graph satisfying the $CD\psi\left(n,-K\right)$ condition with $K\ge0$ where $\psi\in C^{1}\left(0,+\infty\right)$ is a concave function with $\psi^\prime \left(1\right)>0,\ H_{\psi}<\infty$ and $\limsup _{\lambda \downarrow 0}\frac{\eta_{\psi}(\lambda)}{\lambda^{2}}<\infty $. Then there exist  positive constants $c,C$ depending only on $n,\psi,D_\mu,\mu_{\max}, w_{\min}$ such that for any $x\in V$ and any $r>0$,
    \begin{equation*}
        V\left(x,2r\right)\leq Ce^{cKr^2}V\left(x,r\right).
    \end{equation*}
\label{tu doubling -K}
\end{theorem}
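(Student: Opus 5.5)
Our plan follows the Baudoin--Garofalo strategy: combine the heat retention estimate of Theorem \ref{log heat ball -K} with the heat kernel Harnack inequality of Corollary \ref{harnack heat kernel -K}.

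First we dispose of small radii. If $r<1/2$, then $B(x,r)=\{x\}$ and $B(x,2r)\subset B(x,1)\cup\{\text{neighbours of }x\}$. Since $\deg(x)\le D_\mu\mu(x)$ and every edge has weight at least $w_{\min}$, the number of neighbours of $x$ is at most $D_\mu\mu(x)/w_{\min}$. Hence
\begin{equation*}
V(x,2r)\le\mu(x)+\frac{D_\mu\mu(x)\mu_{\max}}{w_{\min}}=C_0V(x,r).
\end{equation*}
The same argument gives the crude one-step bound $V(x,1)\le C_0\mu(x)$, which will be used for $r\ge1/2$ through a ball of radius $2r$ (radius $\ge1$).

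For $r\ge 1/2$, set $t=t_{2r}=A\min\{4r^2,1/K\}$ with $A$ from Theorem \ref{log heat ball -K}, and let $s=2t$. By Theorem \ref{log heat ball -K} applied at every $y\in B(x,2r)$ with radius $2r$ (valid since $2r\ge1/2$), we have $P_t\mathbf 1_{B(y,2r)}(y)\ge\rho$. Summing against $\mu(y)$ and using the symmetry of $p$ gives
\begin{equation*}
\rho V(x,2r)\le\sum_{y\in B(x,2r)}\mu(y)\sum_{z\in B(y,2r)}\mu(z)p(t,y,z).
\end{equation*}
This double sum is unwieldy, so we proceed more directly. By Theorem \ref{log heat ball -K} with radius $r$ at $x$ and time $t'=t_r$, and by the semigroup property (7),
\begin{equation*}
\rho\le\sum_{z\in B(x,r)}\mu(z)p(t',x,z)\le V(x,r)\sup_{z\in B(x,r)}p(t',x,z).
\end{equation*}
In the other direction, $p(2t',x,x)=\sum_z\mu(z)p(t',x,z)^2$, and Cauchy--Schwarz over $B(x,2r)$ gives
\begin{equation*}
\rho^2\le\Big(\sum_{B(x,2r)}\mu p(t',x,\cdot)\Big)^2\le V(x,2r)\,p(2t',x,x).
\end{equation*}
This lower-bounds $V(x,2r)$, which is the wrong direction. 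We therefore use Harnack instead. For each $z\in B(x,r)$, Corollary \ref{harnack heat kernel -K} with times $t'<s'$, spatial points $y=x$, $z$ and the fixed base point $x$ gives $p(t',x,x)\le C\,p(s',x,z)$. We then want an upper bound of the form $p(s',x,x)\lesssim 1/V(x,2r)$. The tool for this is the Harnack inequality in the form $p(t',x,x)\le C\,p(s',x,w)$ for all $w\in B(x,2r)$. Summing over $w\in B(x,2r)$ yields
\begin{equation*}
p(t',x,x)V(x,2r)\le C\sum_w\mu(w)p(s',x,w)\le C.
\end{equation*}
Combining this with the diagonal version of the retention lower bound, $\rho\le V(x,r)\sup_{z\in B(x,r)}p(t',x,z)$, together with a Harnack bound $p(t'',x,z)\le C\,p(t',x,x)$ for $z\in B(x,r)$ at a slightly earlier time $t''=t'/2$ (retention applied at $t''$, which is again of the form $A'\min\{r^2,1/K\}$ after shrinking $A$), we get
\begin{equation*}
\rho\le C\,V(x,r)\,p(t',x,x)\le C^2\,\frac{V(x,r)}{V(x,2r)}.
\end{equation*}

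The constant from the Harnack inequality consists of three factors, and the main task is to bound each. The factor $(s/t)^{n/2\psi'(1)}$ is a fixed constant. The factor $e^{Kn(s-t)/4\psi'(1)}\le e^{cA}$ is bounded because $Kt\le A$. The remaining factor is $\exp\big(c(3+2Ks)d^2/(s-t)\big)$ with $d\le 2r$ and $s-t\asymp\min\{r^2,1/K\}$. When $Kr^2<1$, this exponent is $O(1)$. When $Kr^2\ge1$, it is of order $r^2K$, and this is exactly where the factor $e^{cKr^2}$ comes from.

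The delicate step will be arranging the times so that a single retention time $t_r$ (or its fixed multiple) is used consistently on both sides, with $d^2/(s-t)$ controlled. We would take $t''=t_r/2$ and $t'=s'=t_r$ suitably, and shrink $A$ (as Theorem \ref{log heat ball -K} permits) so that retention holds at both times.
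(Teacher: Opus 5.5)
Your final chain of inequalities is correct. It shares the paper's overall strategy (retention at time $t_r$, a heat kernel Harnack step, then summing over $B(x,2r)$ using $\sum_y\mu(y)p(t,x,y)=1$), but it passes from the retention bound to the on-diagonal kernel in a different way.

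\textbf{Comparison with the paper.} The paper applies Cauchy--Schwarz on $B(x,r)$, together with symmetry and the identity $\sum_z\mu(z)p(t,x,z)^2=p(2t,x,x)$. This gives $\rho^2\le p(2t_r,x,x)\,V(x,r)$, a lower bound on $V(x,r)$ and so exactly the right direction. It then needs only one Harnack step, $p(2t_r,x,x)\le He^{cKr^2}p(4t_r,x,y)$ for $y\in B(x,2r)$, summed over $B(x,2r)$.

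You dismissed Cauchy--Schwarz because you applied it on $B(x,2r)$, where it bounds the wrong volume. In its place you use the trivial bound
\[
\rho\le V(x,r)\sup_{z\in B(x,r)}p(t_1,x,z)
\]
and an extra backward Harnack step $p(t_1,x,z)\le Ce^{cKr^2}p(t_2,x,x)$ for $z\in B(x,r)$, before the same summation step. Your route costs one additional Harnack constant: the final constant is of order $C^2/\rho$ instead of $H/\rho^2$. In exchange it uses only the pointwise Harnack inequality and positivity, not the $L^2$ semigroup identity. Both routes give $Ce^{cKr^2}$.

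\textbf{Time choice.} Your final paragraph is garbled. Taking $t'=s'$ is not allowed, since Corollary~\ref{harnack heat kernel -K} requires $t<s$ and its exponent contains $1/(s-t)$. You also do not need to shrink $A$. Take $t_1=t_r$, $t_2=2t_r$, $t_3=4t_r$ and apply retention directly at $t_r$. In both Harnack steps the time ratio is $2$ and $Ks\le 4A$. Moreover $d^2/(s-t)\le 2r^2/t_r\le 2(1+Kr^2)/A$, which produces the factor $e^{cKr^2}$ exactly as in your case analysis.

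\textbf{Small radii.} For $r<1/2$ you simply have $B(x,2r)=\{x\}=B(x,r)$, so the neighbour-counting bound is unnecessary, though harmless.
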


\begin{proof}
    Fix $x\in V$ and $r\ge \frac{1}{2}$. By \eqref{semigroup and kernel}, properties (6) and (7) of Proposition \ref{heat} and the Cauchy-Schwarz inequality, we obtain
    \begin{equation*}
    \begin{aligned}
         P_{t}\left({\textbf{1}}_{B\left(x,r\right)}\right)\left(x\right)&=\sum_{z\in V}\mu\left(z\right)p\left(t,x,z\right){\textbf{1}}_{B\left(x,r\right)}\left(z\right)\\
         &\leq\left(\sum_{z\in V}\mu\left(z\right)p\left(t,x,z\right)^{2}\right)^{\frac{1}{2}}\left(\sum_{z\in V}\mu\left(z\right){\textbf{1}}_{B\left(x,r\right)}\left(z\right)^{2}\right)^{\frac{1}{2}}\\
         &\leq\big{[}p\left(2t,x,x\right)\big{]}^{\frac{1}{2}}\left[V\left(x,r\right)\right]^{\frac{1}{2}}.
    \end{aligned}
    \end{equation*}
    Setting $t=t_r$, where $t_r=A\min\left\{r^2,\frac{1}{K}\right\}$ is as in Theorem~\ref{log heat ball -K}, gives
    \begin{equation}
        \left[P_{t_r}\left({\textbf{1}}_{B\left(x,r\right)}\right)\left(x\right)\right]^{2}\leq p\left(2t_r,x,x\right)V\left(x,r\right).
    \label{ball r -K}
    \end{equation}
    From Theorem \ref{log heat ball -K}, we have
    \begin{equation*}
          P_{t_r}\left({\textbf{1}}_{B\left(x,r\right)}\right)\left(x\right)\geq \rho(n,\psi,D_{\mu}),
    \end{equation*}
    Combining this with \eqref{ball r -K} yields
    \begin{equation*}
        V\left(x,r\right)\geq\frac{\rho^{2}}{p\left(2t_{r},x,x\right)}.
    \end{equation*}
    We claim that for any $y\in B(x,2r)$, 
    \begin{equation}
        p\left(2t_r,x,x\right)\leq H\left(n,\psi,D_{\mu},\mu_{\max},w_{\min}\right)e^{c\left(n,\psi, D_{\mu},\mu_{\max}, w_{\min}\right)Kr^{2}}p\left(4t_r,x,y\right). 
    \label{harnack t-2t -K}
    \end{equation}
    Upon accepting the claim, we multiply both sides by $\mu(y)$ and sum over $y\in B(x,2r)$. Using property (8) of Proposition \ref{heat}, we get
    \begin{equation*}
        p\left(2t_r,x,x\right)V\left(x,2r\right)\leq He^{cKr^{2}}\sum_{y\in B\left(x,2r\right)}\mu\left(y\right)p\left(4t_r,x,y\right)\leq He^{cKr^{2}}.
    \end{equation*}
    Combining this with the lower bound for $V(x,r)$ gives
    \begin{equation*}
        V\left(x,2r\right)\leq\frac{He^{cKr^{2}}}{p\left(2t_r,x,x\right)}\leq \frac{H}{\rho^2}e^{cKr^{2}}V\left(x,r\right). 
    \end{equation*}
    Thus, the desired estimate holds with a new constant $C=\frac{H}{\rho^{2}}$. It remains to prove the claim. We distinguish two cases.

    If $Kr^2\leq1$, then $t_r=Ar^2$. Applying Corollary \ref{harnack heat kernel -K} with $t=2Ar^{2}$ and $s=4Ar^{2}$, 
    \begin{equation*}
    p\left(2Ar^2,x,x\right)\leq p\left(4Ar^2,x,y\right)2^{\frac{n}{\psi^\prime \left(1\right)}}\exp\left(\frac{An}{2\psi^{\prime}(1)}Kr^2+\frac{2\psi^\prime \left(1\right)H_{\psi}\mu_{\max}\left(3+8A\right)}{3w_{\min}A} \right).
    \end{equation*}
    If $Kr^2\geq1$, then $t_r=\frac{A}{K}$. Applying the same corollary with $t=\frac{2A}{K}$ and $s=\frac{4A}{K}$, 
    \begin{equation*}
    p\left(\frac{2A}{K},x,x\right)\leq p\left(\frac{4A}{K},x,y\right)2^{\frac{n}{\psi^\prime \left(1\right)}}\exp\left(\frac{An}{2\psi^{\prime}(1)}+\frac{2\psi^\prime \left(1\right)H_{\psi}\mu_{\max}\left(3+8A\right)}{3w_{\min}A}Kr^2 \right).
    \end{equation*}
    In both cases, the right-hand side can be written in the form $He^{cKr^{2}}p\left(4t_r,x,y\right)$ by appropriately choosing $H$ and $c$, depending only on the stated quantities. Hence,  the claim follows. Finally, if $0<r\le\frac{1}{2}$, then $B(x,2r)=B(x,r)=\left\{x\right\}$. So
    \begin{equation*}
        V(x,2r)=V(x,r)=\mu(x). 
    \end{equation*}

\end{proof}

\begin{remark}
     If $\psi\in C^{2}\left(0,+\infty\right)$ is a concave function with $\psi^{\prime\prime} \left(1\right)<0$, then $\ H_{\psi}<\infty$ and $limsup _{\lambda \downarrow 0}\frac{\eta_{\psi}(\lambda)}{\lambda^{2}}<\infty$. Here we give an example illustrating that these conditions may hold even for a $C^{1}$ but not $C^{2}$ function. Define
     \begin{equation*}
        \hat{\psi}(s)=\begin{cases}
        \frac{2(1-s)^{2}}{2-s},&\ 0<s<1, \\ 
        \frac{(s-1)^{2}}{s},& \ s\ge1. 
        \end{cases} 
     \end{equation*}
    Then $\hat{\psi}$ is convex and $C^{1}$ but not $C^{2}$. Let $\psi(s)=s+2-\hat{\psi}(s)$. Then $\psi$ is concave and $C^{1}$ but not $C^{2}$.
    Obviously, $\psi^{\prime}(1)>0$ and $\bar{\psi}=\hat{\psi}$. Moreover, 
    \begin{equation*}
        H_{\psi}=\sup_{s>1}\frac{s\left(\log s\right)^{2}}{(s-1)^{2}}<\infty. 
    \end{equation*}
    Furthermore, we have
    \begin{equation*}
        \eta_{\psi}(\lambda)=\begin{cases}
        \frac{2(1-e^{-\lambda})^{2}}{2-e^{-\lambda}},&\ 0<\lambda<\log\frac{3}{2}, \\ 
        4\sinh^{2}\frac{\lambda}{2},& \ \lambda\ge\log\frac{3}{2}. 
        \end{cases} 
     \end{equation*}
    Hence,  by a direct calculus computation, we obtain
    $\limsup _{\lambda \downarrow 0}\frac{\eta_{\psi}(\lambda)}{\lambda^{2}}<\infty $. 
\end{remark}

We conclude this section with a spectral consequence of the volume
doubling property. Let
\begin{equation*}
    \ell^2(V)
    :=
    \left\{
        f\in\mathbb{R}^{V}:
        \sum_{x\in V}\mu(x)|f(x)|^2<\infty
    \right\},
\end{equation*}
equipped with the inner product
\begin{equation*}
    \langle f,g\rangle
    :=
    \sum_{x\in V}\mu(x)f(x)g(x).
\end{equation*}
We denote the bottom of the spectrum of $-\Delta$ by
\begin{equation*}
    \lambda_0(G)
    :=
    \inf\sigma(-\Delta)
    =
    \inf_{\substack{0\neq f\in \ell^{2}(V)}}
    \frac{\langle-\Delta f,f\rangle}
         {\langle f,f\rangle}.
\end{equation*}
For a finite set $\Omega\subset V$, let
\begin{equation*}
    w\left(\partial\Omega\right)
    :=
    \sum_{\substack{x\in\Omega,\\ y\notin\Omega}}
    w_{xy},
    \qquad
    \mu(\Omega):=\sum_{x\in\Omega}\mu(x),
\end{equation*}
and define the Cheeger constant by
\begin{equation*}
    h(G)
    :=
    \inf_{\substack{\emptyset\neq\Omega\subset V\\
    |\Omega|<\infty}}
    \frac{w\left(\partial\Omega\right)}{\mu(\Omega)}.
\end{equation*}

\begin{corollary}
    Let $G=\left(V,E\right)$ be a locally finite weighted graph satisfying the $CD\psi\left(n,0\right)$ condition where $\psi\in C^{1}\left(0,+\infty\right)$ is a concave function with $\psi^\prime \left(1\right)>0,\ H_{\psi}<\infty$ and $\limsup _{\lambda \downarrow 0}\frac{\eta_{\psi}(\lambda)}{\lambda^{2}}<\infty$. Then $h(G)=0$ and $\lambda_{0}(G)=0$. 
\end{corollary}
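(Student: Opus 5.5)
The plan is to use only the uniform volume doubling that Theorem~\ref{tu doubling -K} gives when $K=0$: there is a constant $C_D$, depending only on $n,\psi,D_\mu,\mu_{\max},w_{\min}$, such that $V(x,2r)\le C_D V(x,r)$ for all $x\in V$ and $r>0$. From this I will build explicit finitely supported test sets and test functions. The finite case is trivial. If $V$ is finite, take $\Omega=V$, so $w(\partial\Omega)=0$ and $h(G)=0$; the constant function $\mathbf 1\in\ell^2(V)$ satisfies $\Delta\mathbf 1=0$, so $\lambda_0(G)=0$. I will therefore assume $V$ is infinite. Since $G$ is connected and locally finite, every ball $B(x,r)$ is finite, and $V(x,r)\to\infty$ strictly increasingly along integer radii.

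\emph{Spectrum.} Fix $x$ and $R\ge 2$ and set $f_R(y)=\max\{0,1-d(x,y)/R\}$. This function is finitely supported, hence lies in $\ell^2(V)$. It is $1/R$-Lipschitz and vanishes outside $B(x,R)$. By the discrete Green formula,
\begin{equation*}
\langle -\Delta f_R,f_R\rangle=\tfrac12\sum_{y}\sum_{z\sim y}w_{yz}\bigl(f_R(y)-f_R(z)\bigr)^2 .
\end{equation*}
Only edges with an endpoint in $B(x,R)$ contribute, so
\begin{equation*}
\langle -\Delta f_R,f_R\rangle\le \frac{1}{R^2}\sum_{y\in B(x,R+1)}\deg(y)\le \frac{D_\mu}{R^2}V(x,R+1).
\end{equation*}
On $B(x,R/2)$ we have $f_R\ge 1/2$, so $\langle f_R,f_R\rangle\ge \tfrac14 V(x,R/2)$. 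Applying the doubling inequality twice gives $V(x,R+1)\le V(x,2R)\le C_D^2V(x,R/2)$. Hence the Rayleigh quotient of $f_R$ is at most $4D_\mu C_D^2/R^2\to0$ as $R\to\infty$, which proves $\lambda_0(G)=0$.

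\emph{Cheeger constant.} Take $\Omega=B(x,m)$ with $m\in\mathbb N$. An edge leaving $\Omega$ must start at a vertex $y$ with $d(x,y)=m-1$. Therefore
\begin{equation*}
w(\partial\Omega)\le D_\mu\bigl(V(x,m)-V(x,m-1)\bigr).
\end{equation*}
Suppose, for contradiction, that $h(G)\ge\varepsilon>0$. Then $V(x,m-1)\le(1-\varepsilon/D_\mu)V(x,m)$ for every $m\ge1$. Iterating this gives $V(x,m)\ge (1-\varepsilon/D_\mu)^{-(m-1)}\mu(x)$, which is exponential growth. If $\varepsilon\ge D_\mu$, the inequality already gives $V(x,m-1)\le0$, which is absurd. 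On the other hand, iterating doubling gives polynomial growth $V(x,2^k)\le C_D^{k}\mu(x)$. These two bounds contradict each other for large $m=2^k$, so $h(G)=0$. Alternatively, one can pick a good radius directly: since $\prod_{m=R+1}^{2R}V(x,m-1)/V(x,m)=V(x,R)/V(x,2R)\ge C_D^{-1}$, some $m\in(R,2R]$ has $1-V(x,m-1)/V(x,m)\le \log C_D/R$.

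The only delicate points are the boundary bookkeeping and the requirement that balls be finite and the graph infinite. Neither step is really an obstacle once the doubling of Theorem~\ref{tu doubling -K} is available; the whole corollary is essentially a consequence of polynomial volume growth.
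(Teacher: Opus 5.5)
Your argument is correct. The Cheeger half follows the paper's argument. If $h(G)>0$, applying it to balls forces exponential growth of $V(x,m)$, which contradicts the polynomial growth obtained by iterating the $K=0$ case of Theorem~\ref{tu doubling -K}. You bound $w(\partial B(x,m))$ by degrees on the inner layer $B(x,m)\setminus B(x,m-1)$, while the paper uses the outer layer $B(x,r+1)\setminus B(x,r)$; the difference is immaterial.

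The spectral half is where you genuinely differ. The paper gets $\lambda_0(G)=0$ for free from $h(G)=0$ via the easy inequality $\lambda_0(G)\le h(G)$. It tests the Rayleigh quotient on indicators $\mathbf 1_\Omega$ and uses $\langle-\Delta\mathbf 1_\Omega,\mathbf 1_\Omega\rangle=w(\partial\Omega)$, which is shorter once the Cheeger statement is in hand. You instead test on the Lipschitz cutoffs $f_R$ and obtain $\lambda_0(G)\le 4D_\mu C_D^2/R^2$ directly from doubling, independently of $h(G)$. This is the standard Brooks-type argument. It costs a little more bookkeeping but is quantitative. Quantitative bounds are what actually transfer to the paper's closing remark on finite graphs: either your cutoff estimate, or your pigeonhole choice of a radius with $w(\partial B(x,m))/V(x,m)\le D_\mu\log C_D/R$. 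There, $h(G)$ as defined vanishes trivially by taking $\Omega=V$, so a pure contradiction argument of the infinite-graph type says nothing about a uniform spectral gap.
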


\begin{proof}
Fix $x\in V$. We first show that $h(G)=0$. Suppose, for contradiction, that $h(G)>0$. By the definition of $h(G)$,
\begin{equation*}
    h(G)V(x,r)\leq w\big{(}\partial B(x,r)\big{)}.
\end{equation*}
If $y\notin B(x,r)$ is adjacent to some vertex in $B(x,r)$, then $y\in B(x,r+1)\setminus B(x,r)$. Therefore, by the definition of $D_{\mu}$, 
\begin{equation*}
    w\left(\partial B(x,r)\right)
    \leq\sum_{y\in B(x,r+1)\setminus B(x,r)}\deg(y)
    \leq D_\mu \bigl(V(x,r+1)-V(x,r)\bigr).
\end{equation*}
Combining the two inequalities above, we have 
\begin{equation*}
    V(x,r+1)
    \geq
    \left(1+\frac{h(G)}{D_\mu}\right)V(x,r).
\end{equation*}
Iterating this inequality yields
\begin{equation*}
    V(x,r)\geq V(x,1)\left(1+\frac{h(G)}{D_\mu}\right)^{r-1}. 
\end{equation*}
Thus the volume grows exponentially. But by Theorem \ref{tu doubling -K} with $K=0$,
\begin{equation*}
    V(x,2r)\leq C\left(n,\psi,D_\mu,\mu_{\max}, w_{\min}\right) V(x,r).
\end{equation*}
Iterating this inequality shows that
\begin{equation*}
    V(x,r)\leq C_1 r^{\log_2 C},
\end{equation*}
for all $r\geq1$, where $C_1>0$ is independent of $r$. This contradicts the exponential lower bound above. Consequently, $h(G)=0$.

It remains to prove that $\lambda_0(G)=0$. For every nonempty finite set $\Omega\subset V$, taking $f=\mathbf{1}_\Omega$ gives $\langle-\Delta\mathbf{1}_\Omega,\mathbf{1}_\Omega\rangle=w\left(\partial\Omega\right)$ and $\langle\mathbf{1}_\Omega,\mathbf{1}_\Omega\rangle=\mu(\Omega)$. 
Thus, by the variational characterization of $\lambda_0(G)$,
\begin{equation}
    0\leq\lambda_0(G)\leq
    \frac{w\left(\partial\Omega\right)}{\mu(\Omega)}.
\end{equation}
Taking the infimum over all nonempty finite $\Omega\subset V$ yields
\begin{equation}
     0\leq\lambda_0(G)\leq h(G)=0.
\end{equation}
Therefore, $\lambda_0(G)=0$. 
\end{proof}

\begin{remark}
    A similar argument also rules out families of finite graphs satisfying the above assumptions with a uniform positive spectral gap. In particular, one obtains a $CD\psi$ analogue of the nonexistence of expanders under $CD(n,0)$ proved by M\"unch \cite{Munch 2}.
    
\end{remark}
{\bf Acknowledgements.} The author thanks Professors Bobo Hua and Bo Wu for valuable comments. 



\begin{thebibliography}{16}

\bibitem{Bakry-Emery}
Bakry, D., \'Emery, M. 
\textit{Diffusions hypercontractives. }
S\'eminaire de probabilit\'es, XIX, 1983/84, 177-206. Lecture Notes in Math., {\bf 1123} (1985), Springer, Berlin. MR0889476

\bibitem{Bakry-Qian}
Bakry, D., Qian, Z.-M.
\textit{Harnack inequalities on a manifold with positive or negative Ricci curvature.}
Rev. Mat. Iberoamericana. {\bf 15} (1999), no. 1, 143-179. MR1681640

\bibitem{Baudoin-Garofalo}
Baudoin, F., Garofalo, N.
\textit{Perelman's entropy and doubling property on Riemannian manifolds.}
J. Geom. Anal. {\bf 21} (2011), no. 4, 1119-1131. MR2836593


\bibitem{Bauer}
Bauer, F., Horn, P., Lin, Y., Lippner, G., Mangoubi, D., Yau, S.-T. 
\textit{Li-Yau inequality on graphs.}
J. Differ. Geom. {\bf 99} (2015), no. 3, 359-405. MR3316971

\bibitem{Brooks}
Brooks, R. 
\textit{A relation between growth and the spectrum of the Laplacian.}
Math. Z. {\bf 178} (1981), no. 4, 501-508. MR0638814



\bibitem{Cheng-Yau}
Cheng, S.-Y., Yau, S.-T.
\textit{Differential equations on Riemannian manifolds and their geometric applications.}
Comm. Pure Appl. Math. {\bf 28} (1975), no. 3, 333-354. MR0385749


\bibitem{Chung-Lin-Yau}
Chung, F., Lin, Y., Yau, S.-T.
\textit{Harnack inequalities for graphs with non-negative Ricci curvature.}
J. Math. Anal. Appl. {\bf 415} (2014), no. 1, 25-32. MR3172151


\bibitem{Dodziuk}
Dodziuk J. 
\textit{Difference equations, isoperimetric inequality and transience of certain random walks.}
Trans. Amer. Math. Soc. {\bf 284} (1984), no. 2, 787-794. MR0743744

\bibitem{Folz}
Folz M. 
\textit{Volume growth and spectrum for general graph Laplacians.}
Math. Z. {\bf 276} (2014), no. 1-2, 115-131. MR3150195



\bibitem{Guo}
Guo, Q., Huang, X.-P., Huang, Y.-C. 
\textit{Nonnegative Bakry-\'Emery Curvature on Bounded-Degree Graphs Implies Volume Doubling and Poincar\'e Inequalities.}
arXiv:2607.15522.


\bibitem{Grigoryan}
Grigor'yan, A. A.
\textit{Heat kernel and analysis on manifolds.}
AMS/IP Studies in Advanced Mathematics ,{\bf 47} (2009), American Mathematical Society, Providence, RI; International Press, Boston, MA. MR2569498




\bibitem{Horn}
Horn, P., Lin, Y., Liu, S., Yau, S.-T.
\textit{Volume doubling, Poincar\'e inequality and Gaussian heat kernel estimate for non-negatively curved graphs.}
J. Reine Angew. Math. {\bf 757} (2019), 89-130. MR4036571



\bibitem{Hua-Lin}
Hua, B.-B., Lin, Y. 
\textit{Stochastic completeness for graphs with curvature dimension conditions.}
Adv. Math. {\bf 306} (2017), 279-302. MR3581303


\bibitem{Jost-Liu}
Jost, J., Liu, S.-P.
\textit{Ollivier's Ricci curvature, local clustering and curvature-dimension inequalities on graphs.}
Discrete Comput. Geom. {\bf 51} (2014), no. 2, 300-322. MR3164168

\bibitem{Li-Xu}
Li, J.-F., Xu, X.-J.
\textit{Differential Harnack inequalities on Riemannian manifolds: linear heat equation.}
Adv. Math. {\bf 226} (2011), no. 5, 4456-4491. MR2270456


\bibitem{Li-Yau}
Li, P., Yau, S.-T.
\textit{On the parabolic kernel of the Schr\"odinger operator.}
Acta Math. {\bf 156} (1986), no. 3-4, 153-201. MR0834612

\bibitem{Li-Zhang}
Li, Y., Zhang, Q.-W.
\textit{Gradient estimates on graphs with the $CD\psi(n,-K)$ condition.}
J. Geom. Anal. {\bf 35} (2025), no. 10, Paper No. 324, 25 pp. MR4948694

\bibitem{Lin-Yau}
Lin, Y., Yau, S.-T.
\textit{Ricci curvature and eigenvalue estimate on locally finite graphs.}
Math. Res. Lett. {\bf 17} (2010), no. 2, 343-356. MR2644381


\bibitem{Lv-Wang}
Lv, Y., Wang, L.-F.
\textit{Gradient estimates on connected graphs with the $CD\psi(m,K)$ condition.}
Ann. Mat. Pura Appl. (4) {\bf 198} (2019), no. 6, 2207-2225. MR4031848

\bibitem{Munch}
M\"unch, F.
\textit{Li-Yau inequality on finite graphs via non-linear curvature dimension conditions.}
J. Math. Pures Appl. (9) {\bf 120} (2018), 130-164. MR3906157

\bibitem{Munch 2}
M\"unch, F.
\textit{Li-Yau inequality under $CD(0,n)$ on graphs.}
arXiv:1909.10242. 


\bibitem{Russ-Pajot}
Russ, E., Pajot, H. 
\textit{Infinite graphs satisfying the Bakry-\'Emery curvature condition $CD(0, n)$: The modified heat equation and applications to geometric analysis.}
arXiv:2507.19235.




\bibitem{Zhang}
Zhang, Q. S.
\textit{A sharp Li-Yau gradient bound on compact manifolds.}
Comm. Anal. Geom. {\bf 33} (2025), no. 1, 261-274. MR4870314






\end{thebibliography}
\end{document}